\numberwithin{equation}{section}
\def \dis {\displaystyle}
\def \confai {-\kern -.5em\rightharpoonup}
\def \cqfd {\hfill$\Box$}
\def \div{\mbox{\rm div}}
\def \al {\alpha}
\def \be {\beta}
\def \ga {\gamma}
\def \de {\delta}
\def \De {\Delta}
\def \ep {\varepsilon}
\def \om {\omega}
\def \Om {\Omega}
\def \la {\lambda}
\def \ph {\varphi}
\def \th {\theta}
\def \si {\sigma}
\def \NN {\mathbb N}
\def \ZZ {\mathbb Z}
\def \RR {\mathbb R}
\def \CC {\mathbb C}
\def \M {\mathscr{M}}
\def \beq {\begin{equation}}
\def \eeq {\end{equation}}
\def \ba {\begin{array}}
\def \ea {\end{array}}
\def \bs {\bigskip}
\def \ms {\medskip}
\def \ecart {\noalign{\medskip}}
\newtheorem{Thm}{Theorem}[section]
\newtheorem{Pro}[Thm]{Proposition}
\newtheorem{Lem}[Thm]{Lemma}
\newtheorem{Adef}[Thm]{Definition}
\newtheorem{Arem}[Thm]{Remark}
\newenvironment{Rem}{\begin{Arem}\rm}{\end{Arem}}
\newtheorem{Aexa}[Thm]{Example}
\newtheorem{Anot}[Thm]{Notation}
\def \refe #1.{(\ref{#1})}
\def \reff #1.{figure~\ref{#1}}
\def \refs #1.{Section~\ref{#1}}
\def \refss #1.{Subsection~\ref{#1}}
\def \refD #1.{Definition~\ref{#1}}
\def \refT #1.{Theorem~\ref{#1}}
\def \refL #1.{Lemma~\ref{#1}}
\def \refC #1.{Corollary~\ref{#1}}
\def \refP #1.{Proposition~\ref{#1}}
\def \refPt #1.{Properties~\ref{#1}}
\def \refR #1.{Remark~\ref{#1}}
\def \refE #1.{Example~\ref{#1}}
\def \refN #1.{Notation~\ref{#1}}
\newcounter{marnote}
\title{First Bloch eigenvalue in high contrast media}
\author{Marc Briane\footnote{Institut de Recherche Math\'ematique de Rennes, INSA de Rennes, FRANCE -- mbriane@insa-rennes.fr} \and
Muthusamy Vanninathan\footnote{TIFR-CAM, Bangalore, INDIA -- vanni@math.tifrbng.res.in}}
\begin{document}
\maketitle
\begin{abstract}
This paper deals with the asymptotic behavior of the first Bloch eigenvalue in a heterogeneous medium with a high contrast $\ep Y$-periodic conductivity. When the conductivity is bounded in $L^1$ and the constant of the Poincar\'e-Wirtinger weighted by the conductivity is very small with respect to $\ep^{-2}$, the first Bloch eigenvalue converges as $\ep\to 0$ to a limit which preserves the second-order expansion with respect to the Bloch parameter. In dimension two the expansion of the limit can be improved until the fourth-order under the same hypotheses. On the contrary, in dimension three a fibers reinforced medium combined with a $L^1$-unbounded conductivity leads us to a discontinuity of the limit first Bloch eigenvalue as the Bloch parameter tends to zero but remains not orthogonal to the direction of the fibers. Therefore, the high contrast conductivity of the microstructure induces an anomalous effect, since for a given low-contrast conductivity the first Bloch eigenvalue is known to be analytic with respect to the Bloch parameter around zero.
\end{abstract}
\vskip .5cm\noindent
{\bf Keywords:} periodic structure, homogenization, high contrast, Bloch waves, Burnett coefficients
\par\bs\noindent
{\bf Mathematics Subject Classification:} 35B27, 35A15, 35P15
\section{Introduction}
The oscillating operators of type
\beq\label{ade}
\nabla\cdot\big(a(x/\de)\nabla\cdot\big),\quad\mbox{as }\de\to 0,
\eeq
for coercive and bounded $Y$-periodic matrix-valued functions $a(y)$ in $\RR^d$, which model the conduction in highly heterogeneous media, have been widely studied since the seminal work \cite{BLP} based on an asymptotic expansion of \refe{ade}.. In the end of the nineties an alternative approach was proposed in \cite{CoVa} using the Bloch wave decomposition. More precisely, this method consists in considering the discrete spectrum $\big(\la_m(\eta),\phi_m(\eta)\big)$, $m\geq 1$, of the translated complex operator (see \cite{COV1} for the justification)
\beq\label{Aeta}
A(\eta):=-\,(\nabla+i\eta)\cdot\big[a(y)\,(\nabla+i\eta)\big],\quad\mbox{for a given }\eta\in\RR^d.
\eeq
It was proved in \cite{CoVa} that the first Bloch pair $\big(\la_1(\eta),\phi_1(\eta)\big)$ actually contains the essential informations on the asymptotic analysis of the operator \refe{ade}., and are analytic with respect to the Bloch parameter $\eta$ in a neighborhood of $0$. Moreover, by virtue of \cite{CoVa,COV1} it turns out that the first Bloch eigenvalue satisfies the following expansion in terms of the so-called Burnett coefficients:
\beq\label{expla1eta}
\la_1(\eta)=q\eta\cdot\eta+D(\eta\otimes\eta):(\eta\otimes\eta)+o(|\eta|^4),
\eeq
where $q$ is the homogenized positive definite conductivity associated with the oscillating sequence $a(x/\de)$, and $D$ is the fourth-order dispersion tensor which has the remarkable property to be non-positive for any conductivity matrix $a$ (see \cite{COV2}).
\par
The expansion \refe{expla1eta}. has been investigated more deeply in one dimension~\cite{CSSV2} and in low contrast regime \cite{CSSV1}. It is then natural to study the behavior of \refe{expla1eta}. in high contrast regime. This is also motivated by the fact that the homogenization of operators \refe{ade}. with high contrast coefficients may induce nonlocal effects in dimension three as shown in \cite{FeKh,BeBo,CESe,BrTc},
while the two-dimensional case of \cite{Bri3,BrCa1,BrCa2} is radically different. We are interested in knowing the consequences of these effects in Bloch waves analysis.
\bs\par
The aim of the paper is then to study the asymptotic behavior of the first Bloch eigenvalue in the presence of high contrast conductivity coefficients. In particular we want to specify the validity of expansion \refe{expla1eta}. in high contrast regime. To this end we consider an $\ep Y$-periodic matrix conductivity $a^\ep$ which is equi-coercive but not equi-bounded with respect to $\ep$, namely $\|a^\ep\|_{L^\infty}\to\infty$ as $\ep\to 0$. The classical picture is an $\ep Y$-periodic two-phase microstructure, one of the phase has a conductivity which blows up as $\ep$ tends to $0$. More precisely, we will study the limit behavior of the first Bloch eigenvalue $\la_1^\ep(\eta)$ associated with $a^\ep$, and its expansion
\beq\label{expla1epeta}
\la_1^\ep(\eta)=q^\ep\eta\cdot\eta+D^\ep(\eta\otimes\eta):(\eta\otimes\eta)+o(|\eta|^4).
\eeq
\par
In Section \ref{s.L1bd}, we prove that in any dimension (see Theorem~\ref{thm1}), if the conductivity $a^\ep$ is bounded in $L^1$ and the constant of the Poincar\'e-Wirtinger inequality weighted by $a^\ep$ is an $o(\ep^{-2})$ (see \cite{Bri1} for an example), then the first Bloch eigenvalue $\la_1^\ep(\eta)$ associated with $a^\ep$ converges to some limit $\la_1^*(\eta)$ which satisfies
\beq
\la_1^*(\eta)=q^*\eta\cdot\eta,\quad\mbox{for small enough }|\eta|,
\eeq
where $q^*$ is the limit of the homogenized matrix $q^\ep$ in \refe{expla1epeta}.. Moreover, in dimension two and under the same assumptions we show that the tensor $D^\ep$ tends to $0$, which thus implies that the fourth-order expansion \refe{expla1epeta}. of $\la_1^\ep(\eta)$ converges to the fourth-order expansion of its limit. We can also refine the two-dimensional case by relaxing the $L^1$-boundedness of $a^\ep$ by the sole convergence of $q^\ep$ (see Theorem~\ref{thm12d}). 
\par
In Section~\ref{s.L1unbd}, we show that the previous convergences do not hold generally in dimension three when $a^\ep$ is not bounded in $L^1$. We give a counter-example (see Theorem~\ref{thm2}) which is based on the fibers reinforced structure introduced first in \cite{FeKh} to derive nonlocal effects in high contrast homogenization. This is the main result of the paper. We show the existence of a jump at $\eta=0$ in the limit $\la_1^*(\eta)$ of the first Bloch eigenvalue. Indeed, when the radius of the fibers has a critical size and $\eta$ is not orthogonal to their direction, the first Bloch eigenvector $\psi_1^\ep$ is shown to converge weakly in $H^1_{\rm loc}(\RR^3;\CC)$ to some function $\psi_1^*$ solution of
\beq\label{equpsi*}
-\,\De\psi_1^*+\ga\,\psi_1^*=\la_1^*(\eta)\,\psi_1^*\quad\mbox{in }\RR^3,
\eeq
where
\beq
\ga=\lim_{\eta\to 0}\la_1^*(\eta)\neq\la_1^*(0)=0.
\eeq
Therefore, contrary to the analyticity of $\eta\mapsto\la_1^\ep(\eta)$ which holds for fixed $\ep$, the limit $\la_1^*$ of the first Bloch eigenvalue is not even continuous at $\eta=0$! On the other hand, the zero-order term in limit \refe{equpsi*}. is linked to the limit zero-order term obtained in \cite{BeBo,BrTc} under the same regime, for the conduction equation with the conductivity $a^\ep$ but with a Dirichlet boundary condition. Here, the periodicity condition satisfied by the function $y\mapsto e^{-i\,\eta\cdot y}\,\psi_1^\ep(y)$ (in connection with the translated operator \refe{Aeta}.) is quite different and more delicate to handle. Using an estimate of the Poincar\'e-Wirtinger inequality weighted by $a^\ep$ and the condition  that $\eta$ is not orthogonal to the direction of the fibers, we can get the limit in the Radon measures sense of the eigenvector $\psi_1^\ep$ rescaled in the fibers.
\subsection{Notations}
\begin{itemize}
\item $\ep$ denotes a small positive number such that $\ep^{-1}$ is an integer.
\item $\left(e_1,\dots,e_d\right)$ denotes the canonical basis of $\RR^d$.
\item $\cdot$ denotes the canonical scalar product in $\RR^d$.
\item $:$ denotes the canonical scalar product in $\RR^{d\times d}$.
\item $Y$ denotes the cube $(0,2\pi)^d$ in $\RR^d$.
\item $H^1_\sharp(Y)$ denotes the space of the $Y$-periodic functions which belong to $H^1_{\rm loc}(\RR^d)$.
Similarly, $L^p_\sharp(Y)$, for $p\geq 1$, denotes the space of the $Y$-periodic functions which belong to $L^p_{\rm loc}(\RR^d)$, and $C^k_\sharp(Y)$, for $k\in\NN$, denotes the space of the $C^k$-regular $Y$-periodic functions in $\RR^d$.
\item For any $\eta\in\RR^d$, $H^1_\eta(Y;\CC)$ denotes the space of the functions $\psi$ such that
\beq\label{H1eta}
\big(x\mapsto e^{-i\,x\cdot\eta}\,\psi(x)\big)\in H^1_\sharp(Y;\CC).
\eeq
Similarly, $L^p_\eta(Y;\CC)$, for $p\geq 1$, denotes the set denotes the set associated with the space $L^p_\sharp(Y;\CC)$, and $C^k_\eta(Y;\CC)$, for $k\in\NN$, the set associated with the space $C^k_\sharp(Y;\CC)$.
\item For any open set $\Om$ of $\RR^d$, $BV(\Om)$ denotes the space of the functions in $L^2(\Om)$ the gradient of which is a Radon measure on $\Om$. 
\end{itemize}
\section{The case of $L^1$-bounded coefficients}\label{s.L1bd}
Let $\ep>0$ be such that $\ep^{-1}$ is an integer.
Let $A^\ep$ be a $Y$-periodic measurable real matrix-valued function satisfying
\beq\label{Aep}
\left(A^\ep\right)^T(y)=A^\ep(y)\quad\mbox{and}\quad\al\,I_d\leq A^\ep(y)\leq\be_\ep\,I_d\qquad\mbox{a.e. }y\in\RR^d,
\eeq
where $\al$ is a fixed positive number and $\be_\ep$ is a sequence in $(0,\infty)$ which tends to $\infty$ as $\ep\to 0$.
Let $a^\ep$ be the rescaled matrix-valued function defined by
\beq\label{aep}
a^\ep(x):=A^\ep\left({x\over\ep}\right)\quad\mbox{for }x\in\RR^d.
\eeq
Define the effective conductivity $q^\ep$ by
\beq\label{qep}
q^\ep\la:=\fint_Y A^\ep\big(\la+\nabla X_\la^\ep\big)\,dy\quad\mbox{for }\la\in\RR^d,
\eeq
where $X_\la^\ep$ is the unique solution in $H^1_\sharp(Y)/\RR$ of the equation
\beq\label{Xepla}
\div\left(A^\ep\la+A^\ep\nabla X_\la^\ep\right)=0\quad\mbox{in }\RR^d.
\eeq
For a fixed $\ep>0$, the constant matrix $q^\ep$ is the homogenized matrix associated with the oscillating sequence $A^\ep({x\over\de})$ as $\de\to 0$, according to the classical homogenization periodic formula (see, {\it e.g.}, \cite{BLP}).
\par
Consider for $\eta\in\RR^d$, the first Bloch eigenvalue $\la_1^\ep(\eta)$ associated with the conductivity $a^\ep$ by
\beq\label{laepeta}
\la_1^\ep(\eta):=\min\left\{\int_Y a^\ep\nabla\psi\cdot \nabla\overline{\psi}\,dx\;:\;\psi\in H^1_\eta(Y;\CC)\mbox{ and }\int_Y|\psi|^2\,dx=1\right\}.
\eeq
A minimizer $\psi^\ep$ of \refe{laepeta}. solves the variational problem
\beq\label{vppsiep}
\int_Y a^\ep\nabla\psi^\ep\cdot \nabla\overline{\psi}\,dx=\la^\ep_1(\eta)\int_Y\psi^\ep\,\overline{\psi}\,dx,\quad\forall\,\psi\in H^1_\eta(Y;\CC),
\eeq
with
\beq\label{psiep}
\psi^\ep\in H^1_\eta(Y;\CC)\quad\mbox{and}\quad\int_Y|\psi^\ep|^2\,dx=1.
\eeq
An alternative definition for $\psi^\ep$ is given by the following result:
\begin{Pro}\label{pro.psiep}
The variational problem \refe{vppsiep}. is equivalent to the equation in the distributional sense
\beq\label{eqpsiep}
-\,\div\left(a^\ep\nabla\psi^\ep\right)=\la^\ep_1(\eta)\,\psi^\ep\quad\mbox{in }\RR^d.
\eeq
\end{Pro}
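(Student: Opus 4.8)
The plan is to move between the two formulations by an unfolding/periodization argument, exploiting that $a^\ep$ is $2\pi\ZZ^d$-periodic (since $\ep^{-1}\in\NN$ and $Y=(0,2\pi)^d$, the function $a^\ep$ has period $2\pi$ in each variable) and that every $\psi^\ep\in H^1_\eta(Y;\CC)$ is $\eta$-quasiperiodic, meaning $\psi^\ep(\cdot+2\pi k)=e^{2\pi i\,k\cdot\eta}\,\psi^\ep$ and $\nabla\psi^\ep(\cdot+2\pi k)=e^{2\pi i\,k\cdot\eta}\,\nabla\psi^\ep$ for every $k\in\ZZ^d$. For a fixed $\ep$ one has $a^\ep\in L^\infty$, hence $a^\ep\nabla\psi^\ep\in L^2_{\rm loc}(\RR^d;\CC^d)$, so $\div(a^\ep\nabla\psi^\ep)$ is a well-defined distribution and equation \eqref{eqpsiep} is meaningful.

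The first step is to introduce the periodization map. To $\varphi\in\D(\RR^d;\CC)$ I associate
\[
\Phi(x):=\sum_{k\in\ZZ^d}e^{-2\pi i\,k\cdot\eta}\,\varphi(x+2\pi k),
\]
a locally finite sum; then $x\mapsto e^{-i\,x\cdot\eta}\Phi(x)$ is $Y$-periodic and smooth, so $\Phi\in C^\infty_\eta(Y;\CC)\subset H^1_\eta(Y;\CC)$. Splitting $\RR^d=\bigcup_{k\in\ZZ^d}(Y+2\pi k)$, changing variables on each cell, and using the periodicity of $a^\ep$ together with the two quasiperiodicity relations to bring $a^\ep$ and $\nabla\psi^\ep$ back onto $Y$, one obtains the identities
\[
\int_{\RR^d}a^\ep\nabla\psi^\ep\cdot\nabla\overline{\varphi}\,dx=\int_Y a^\ep\nabla\psi^\ep\cdot\nabla\overline{\Phi}\,dx
\qquad\text{and}\qquad
\int_{\RR^d}\psi^\ep\,\overline{\varphi}\,dx=\int_Y\psi^\ep\,\overline{\Phi}\,dx,
\]
after noticing that $\overline{\Phi}=\sum_k e^{2\pi i\,k\cdot\eta}\,\overline{\varphi}(\cdot+2\pi k)$ and likewise for the gradients, so that the cell sums telescope into the periodized functions.

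With these identities the two implications follow. If \eqref{vppsiep} holds, I test it with the admissible function $\psi=\Phi$ and use the identities above to obtain $\int_{\RR^d}a^\ep\nabla\psi^\ep\cdot\nabla\overline{\varphi}\,dx=\la_1^\ep(\eta)\int_{\RR^d}\psi^\ep\,\overline{\varphi}\,dx$ for every $\varphi\in\D(\RR^d;\CC)$; replacing $\varphi$ by $\overline{\varphi}$, this is exactly \eqref{eqpsiep}. Conversely, if \eqref{eqpsiep} holds I test it against $\overline{\varphi}$ and use the identities in the reverse direction to get $\int_Y a^\ep\nabla\psi^\ep\cdot\nabla\overline{\Phi}\,dx=\la_1^\ep(\eta)\int_Y\psi^\ep\,\overline{\Phi}\,dx$ for all $\varphi\in\D(\RR^d;\CC)$. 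I then conclude by density: $C^\infty_\eta(Y;\CC)$ is dense in $H^1_\eta(Y;\CC)$ (via the isomorphism $\psi\mapsto e^{-i\,x\cdot\eta}\psi$ onto $H^1_\sharp(Y;\CC)$ and the density of trigonometric polynomials), and every $\Psi\in C^\infty_\eta(Y;\CC)$ is itself a periodization, namely that of $\varphi:=e^{i\,x\cdot\eta}\,\theta\,\big(e^{-i\,x\cdot\eta}\Psi\big)\in\D(\RR^d;\CC)$ with $\theta\in\D(\RR^d)$ satisfying $\sum_k\theta(\cdot+2\pi k)\equiv 1$; since both sides of the last identity are continuous with respect to $\Phi$ in the $H^1_\eta(Y;\CC)$-norm (here $a^\ep\nabla\psi^\ep,\psi^\ep\in L^2(Y)$), it passes to the limit and yields \eqref{vppsiep}.

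Most of this is bookkeeping; the single delicate point is that one cannot substitute a test function $\varphi\in\D(\RR^d)$ directly into \eqref{vppsiep}, since it is not $\eta$-quasiperiodic — hence the need for the periodization $\Phi$ and for the density statement above. I expect the density claim (that every smooth $\eta$-quasiperiodic function arises as a periodization of a compactly supported one) to be the step most worth making explicit, even though it reduces at once to the existence of a smooth $2\pi\ZZ^d$-periodic partition of unity.
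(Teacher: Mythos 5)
Your proof is correct. The forward implication (variational problem $\Rightarrow$ distributional equation) is exactly the paper's argument: periodize $\ph\in C^\infty_c(\RR^d)$ into an element of $C^\infty_\eta(Y;\CC)$ and unfold cell by cell, using the $2\pi\ZZ^d$-periodicity of $a^\ep$ (which requires $\ep^{-1}\in\NN$) and the $\eta$-quasiperiodicity of $\psi^\ep$ and $\nabla\psi^\ep$ to make the phase factors cancel. Where you genuinely diverge is the converse. The paper tests \eqref{eqpsiep} against $\th_n\,\overline{\psi}$ for a cutoff $\th_n$ equal to $1$ on $[-2\pi n,2\pi n]^d$, divides by $(2n)^d$ and lets $n\to\infty$, recovering the cell integrals from the $Y$-periodicity of $a^\ep\nabla\psi^\ep\cdot\nabla\overline{\psi}$ and $\psi^\ep\,\overline{\psi}$, the boundary-layer contributions involving $\nabla\th_n$ being $o_n(1)$ after normalization. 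You instead observe that the periodization map is surjective onto $C^\infty_\eta(Y;\CC)$: every $\Psi$ there is the periodization of $e^{i\,x\cdot\eta}\,\th\,\big(e^{-i\,x\cdot\eta}\Psi\big)$ for a smooth $\th$ with $\sum_k\th(\cdot+2\pi k)\equiv 1$ (your computation that the phases cancel and the sum collapses to $\Psi$ is right), so the same two unfolding identities serve both directions, and the density of $C^\infty_\eta(Y;\CC)$ in $H^1_\eta(Y;\CC)$ finishes. Your route buys a slightly cleaner converse — no normalization-and-limit bookkeeping — and makes explicit the density step that the paper leaves implicit in its final sentence; the paper's averaging argument, on the other hand, does not require identifying the range of the periodization map. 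Both arguments are sound, and your identification of the periodization/quasiperiodicity interplay as the one delicate point is accurate.
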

\begin{proof}
Let $\psi^\ep\in H^1_\eta(Y)$ be a solution of \refe{vppsiep}. and let $\ph$ be a function in $C^\infty_c(\RR^d)$. Writing $\psi^\ep=e^{i\,x\cdot\eta}\,\ph^\ep$ with $\ph^\ep\in H^1_\sharp(Y;\CC)$, and putting the function $\psi\in C^\infty_\eta(Y;\CC)$ defined by
\[
\psi(x):=\sum_{k\in\ZZ^d}e^{-i\,2\pi k\cdot\eta}\,\overline{\ph}(x+2\pi k)
=e^{i\,x\cdot\eta}\sum_{k\in\ZZ^d}e^{-i\,(x+2\pi k)\cdot\eta}\,\overline{\ph}(x+2\pi k),
\]
as test function in \refe{vppsiep}., we have by the $Y$-periodicity of $a^\ep$ (recall that $\ep$ is an integer),
\[
\ba{l}
\dis \int_Y a^\ep\nabla\psi^\ep\cdot\nabla\overline{\psi}\,dx
\\ \ecart
\dis =\sum_{k\in\ZZ^d}\int_Y a^\ep\left(\nabla\ph^\ep+i\,\eta\,\ph^\ep\right)\cdot
\left[\nabla\big(e^{i(x+2\pi k)\cdot\eta}\,\ph(x+2\pi k)\big)-i\,\eta\,\big(e^{i(x+2\pi k)\cdot\eta}\,\ph(x+2\pi k)\big)\right]dx
\\ \ecart
\dis =\dis \sum_{k\in\ZZ^d}\int_{2\pi k+Y} a^\ep\left(\nabla\ph^\ep+i\,\eta\,\ph^\ep\right)\cdot
\left[\nabla\big(e^{i\,x\cdot\eta}\,\ph\big)-i\,\eta\,\big(e^{i\,x\cdot\eta}\,\ph\big)\right]dx
\\ \ecart
\dis =\int_{\RR^d}a^\ep\nabla\psi^\ep\cdot\nabla\ph\,dx,
\ea
\]
and
\[
\ba{ll}
\dis \int_Y \psi^\ep\,\overline{\psi}\,dx
& \dis =\sum_{k\in\ZZ^d}\int_Y \ph^\ep\,e^{i(x+2\pi k)}\,\ph(x+2\pi k)\,dx
\\ \ecart
& =\dis \sum_{k\in\ZZ^d}\int_{2\pi k+Y} \ph^\ep\,e^{i\,x\cdot\eta}\,\ph\,dx=\int_{\RR^d}\psi^\ep\,\ph\,dx.
\ea
\]
Hence, we get that
\[
\int_{\RR^d}a^\ep\nabla\psi^\ep\cdot\nabla\ph\,dx=\la^\ep_1(\eta)\int_{\RR^d}\psi^\ep\,\ph\,dx,\quad\mbox{for any }\ph\in C^\infty_c(\RR^d),
\]
which yields equation \refe{eqpsiep}..
\par
Conversely, assume that $\psi^\ep$ is a solution of \refe{eqpsiep}.. Consider $\psi\in C^\infty_\eta(Y;\CC)$, and for any integer $n\geq 1$, a function $\th_n\in C^\infty_c(\RR^d)$ such that
\[
\th_n=1\;\;\mbox{in }[-2\pi n,2\pi n]^d,\quad \th_n=0\;\;\mbox{in }\RR^d\setminus[-2\pi(n+1),2\pi(n+1)]^d,
\quad|\nabla\th_n|\leq 1\;\;\mbox{in }\RR^d.
\]
Putting $\ph:=\th_n\,\overline{\psi}$ as test function in \refe{eqpsiep}., we have as $n\to\infty$ and by the $Y$-periodicity of
$a^\ep\nabla\psi^\ep\cdot\nabla\overline{\psi}$,
\[
\ba{ll}
\dis {1\over (2n)^d}\int_{\RR^d}a^\ep\nabla\psi^\ep\cdot\nabla(\th_n\,\overline{\psi})\,dx
& \dis ={1\over (2n)^d}\sum_{k\in\{-n,\dots,n-1\}^d}\int_{2\pi k+Y}a^\ep\nabla\psi^\ep\cdot\nabla\overline{\psi}\,dx+o_n(1)
\\ \ecart
& \dis =\int_Y a^\ep\nabla\psi^\ep\cdot\nabla\overline{\psi}\,dx +o_n(1),
\ea
\]
and by the $Y$-periodicity of $\psi^\ep\,\overline{\psi}$,
\[
\ba{ll}
\dis {1\over (2n)^d}\int_{\RR^d}\psi^\ep\,\th_n\,\overline{\psi}\,dx
& \dis ={1\over (2n)^d}\sum_{k\in\{-n,\dots,n-1\}^d}\int_{2\pi k+Y}\psi^\ep\,\overline{\psi}\,dx+o_n(1)
\\ \ecart
& \dis =\int_Y \psi^\ep\,\overline{\psi}\,dx +o_n(1).
\ea
\]
Therefore, it follows that $\psi^\ep$ is solution of the variational problem \refe{vppsiep}..
\end{proof}
Note that for a fixed $\ep>0$, the oscillating sequence $a^\ep({x\over\de})=A^\ep({x\over\ep\de})$ has the same homogenized limit as $A^\ep({x\over\de})$ when $\de$ tends to $0$, namely the constant matrix $q^\ep$ defined by \refe{qep}.. Hence, the asymptotic expansion in $\eta$ of the first Bloch eigenvalue derived in \cite{COV2} reads as
\beq\label{asylaepeta}
\la_1^\ep(\eta)=q^\ep\eta\cdot\eta+D^\ep(\eta\otimes\eta):(\eta\otimes\eta)+O(|\eta|^6),
\eeq
where $D^\ep$ is a non-positive fourth-order tensor defined in formula \refe{Dep}. below.
\par
When $a^\ep$ is not too high, we have the following asymptotic behavior for $\la_1^\ep(\eta)$:
\begin{Thm}\label{thm1}
Assume that the sequence $a^\ep$ of \refe{aep}. is bounded in $L^1(Y)$.
\begin{itemize}
\item If $d=2$, then there exists a subsequence of $\ep$, still denoted by $\ep$, such that the sequence $q^\ep$ converges to some $q^*$ in $\RR^{2\times 2}$. Moreover, we have for any $\eta\in\RR^2$,
\beq\label{limlaepeta}
\lim_{\ep\to 0}\la_1^\ep(\eta)=\min\left\{\int_Y q^*\nabla\psi\cdot\nabla\overline{\psi}\,dx\;:\;\psi\in H^1_\eta(Y;\CC)\mbox{ and }\int_Y|\psi|^2\,dx=1\right\},
\eeq
and for small enough $|\eta|$,
\beq\label{limlaepeta0}
\lim_{\ep\to 0}\la_1^\ep(\eta)=q^*\eta\cdot\eta.
\eeq
\item If $d\geq 2$, under the extra assumption that that for any $\la\in\RR^d$,
\beq\label{Cepla}
C^\ep_\la:=\max\left\{\int_Y(A^\ep\la\cdot\la)\,V^2\,dy\;:\;V\in H^1_\sharp(Y),\ \int_Y A^\ep\nabla V\cdot\nabla V\,dy=1\right\}\ll{1\over\ep^2},
\eeq
and \refe{limlaepeta}., \refe{limlaepeta0}. still hold. Moreover, if $d=2$ we have
\beq\label{limDep}
\lim_{\ep\to 0}\big(D^\ep(\eta\otimes\eta):(\eta\otimes\eta)\big)=0,\quad\forall\,\eta\in\RR^d.
\eeq
\end{itemize}
\end{Thm}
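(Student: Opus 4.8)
The plan is to read \eqref{laepeta} as a family of constrained quadratic minimisations and to establish the $\Gamma$-convergence, in the weak topology of $H^1(Y;\CC)$, of the energies $\psi\mapsto\int_Y a^\ep\nabla\psi\cdot\nabla\overline\psi\,dx$ restricted to $H^1_\eta(Y;\CC)$ towards $\psi\mapsto\int_Y q^*\nabla\psi\cdot\nabla\overline\psi\,dx$; together with the equi-coercivity provided by $a^\ep\geq\al\,I_d$, this gives by the standard argument for minimum problems the convergence of the first eigenvalue and the identity \eqref{limlaepeta}. First one obtains the convergence $q^\ep\to q^*$ along a subsequence for free: testing \eqref{Xepla} against $X^\ep_\la$ yields $\int_Y A^\ep\nabla X^\ep_\la\cdot\nabla X^\ep_\la\,dy=\int_Y A^\ep\la\cdot\la\,dy-|Y|\,q^\ep\la\cdot\la$, whence $0\leq q^\ep\la\cdot\la\leq\fint_Y A^\ep\la\cdot\la\,dy\leq C\,|\la|^2$ by the $L^1$-bound; since moreover $q^\ep\geq\al\,I_d$, a subsequence of the symmetric matrices $q^\ep$ converges to some $q^*\geq\al\,I_d$. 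Granting \eqref{limlaepeta}, the identity \eqref{limlaepeta0} is then a one-line Fourier computation on the torus: with $\psi=e^{i\,x\cdot\eta}\sum_{k\in\ZZ^d}c_k\,e^{i\,k\cdot x}$ one has $\int_Y q^*\nabla\psi\cdot\nabla\overline\psi\,dx=|Y|\sum_k|c_k|^2\,q^*(k+\eta)\cdot(k+\eta)$, and for $|\eta|$ small (depending only on the ellipticity constants of $q^*$) the minimum under $\sum_k|c_k|^2=|Y|^{-1}$ is attained at $k=0$, with value $q^*\eta\cdot\eta$.

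Both the $\Gamma$-limsup and the $\Gamma$-liminf rest on one structural fact, which is \textbf{the main obstacle}. Set $w^\ep_\la(x):=\ep\,X^\ep_\la(x/\ep)$, which is $Y$-periodic (recall $\ep^{-1}\in\NN$) and solves $\div\big(a^\ep(\la+\nabla w^\ep_\la)\big)=0$ in $\RR^d$. The divergence-free corrector fluxes $\sigma^\ep_j:=a^\ep(e_j+\nabla w^\ep_{e_j})$ and the associated quadratic quantities $a^\ep(e_j+\nabla w^\ep_{e_j})\cdot(e_k+\nabla w^\ep_{e_k})$ equal $q^\ep e_j$, resp.\ $q^\ep_{jk}$, in mean, but --- because $a^\ep$ is not equi-bounded --- they are only bounded in $L^1_{\rm loc}(\RR^d)$, hence a priori converge only weakly-$*$ as Radon measures; one must show that they converge \emph{weakly in $L^1_{\rm loc}$} to the constants $q^*e_j$, resp.\ $q^*_{jk}$, i.e.\ without any singular defect. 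Such a defect is exactly the nonlocal term produced by three-dimensional high-contrast homogenization (the subject of Section~\ref{s.L1unbd}), and it is ruled out here in two ways. When $d=2$, the two-dimensional high-contrast homogenization theory of \cite{Bri3,BrCa1,BrCa2} --- which exploits the stream-function/compensated-compactness structure special to the plane --- guarantees the locality of the limit and the required $L^1$-weak convergence, and also keeps $\int_Y(A^\ep\la\cdot\la)\,|X^\ep_\la|^2\,dy$ bounded. When $d\geq2$, the hypothesis $C^\ep_\la\ll\ep^{-2}$ does the job: it says precisely that the $a^\ep$-weighted Poincar\'e-Wirtinger constant on a \emph{physical} cell $\ep Y$ tends to $0$, which forces the correctors, hence the fluxes $\sigma^\ep_j$, to be equi-integrable, and then a div-curl argument applies.

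Granting this, the $\Gamma$-limsup follows from the classical oscillating corrector: taking $\phi^*$ a minimiser of the limit problem written through $\psi=e^{i\,x\cdot\eta}\phi$ (a single Fourier mode, hence smooth), put $\psi^\ep:=e^{i\,x\cdot\eta}\big(\phi^*+\sum_j(\partial_j\phi^*+i\,\eta_j\phi^*)\,w^\ep_{e_j}\big)$, so that $\nabla\psi^\ep=e^{i\,x\cdot\eta}(G^\ep+E^\ep)$ with $G^\ep:=\sum_j(\partial_j\phi^*+i\,\eta_j\phi^*)(e_j+\nabla w^\ep_{e_j})$ and $E^\ep$ collecting the terms in which $w^\ep_{e_j}$ occurs undifferentiated. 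The main term $\int_Y a^\ep G^\ep\cdot\overline{G^\ep}\,dx$ converges to $\int_Y q^*(\nabla\phi^*+i\,\eta\phi^*)\cdot\overline{(\nabla\phi^*+i\,\eta\phi^*)}\,dx$ by the structural fact, while by Cauchy-Schwarz the cross and remainder terms are dominated by $\int_Y a^\ep|E^\ep|^2\,dx\leq C\,\ep^2\sum_j\int_Y(A^\ep e_j\cdot e_j)\,|X^\ep_{e_j}|^2\,dy\to 0$ (using the energy bound $\int_Y A^\ep\nabla X^\ep_\la\cdot\nabla X^\ep_\la\,dy\leq C|\la|^2$ together with $C^\ep_\la\ll\ep^{-2}$, or the two-dimensional estimate above); normalising $\psi^\ep$ in $L^2(Y)$ gives $\limsup_\ep\la_1^\ep(\eta)\leq\min\{\int_Y q^*\nabla\psi\cdot\nabla\overline\psi\,dx:\psi\in H^1_\eta(Y;\CC),\ \int_Y|\psi|^2\,dx=1\}$. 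For the $\Gamma$-liminf, let $\psi^\ep$ minimise \eqref{laepeta}; then $\int_Y a^\ep|\nabla\psi^\ep|^2\,dx=\la_1^\ep(\eta)\leq C$, so $\psi^\ep$ is bounded in $H^1(Y;\CC)$ and, along a subsequence, $\psi^\ep\rightharpoonup\psi^*$ in $H^1$ and strongly in $L^2$, with $\psi^*\in H^1_\eta(Y;\CC)$ (the constraint $e^{-i\,x\cdot\eta}\psi^\ep\in H^1_\sharp$ being stable under weak convergence) and $\int_Y|\psi^*|^2\,dx=1$. Expanding $0\leq\int_Y a^\ep(\nabla\psi^\ep-\nabla\zeta^\ep)\cdot\overline{(\nabla\psi^\ep-\nabla\zeta^\ep)}\,dx$ for the corrector ansatz $\zeta^\ep$ built from an arbitrary smooth $\zeta\in C^\infty_\eta(Y;\CC)$, the only non-trivial point is the passage to the limit in the mixed term $\int_Y a^\ep\nabla\psi^\ep\cdot\overline{\nabla\zeta^\ep}\,dx$, again supplied by the structural fact ($\nabla\psi^\ep$ being curl-free with $\int_Y|\nabla\psi^\ep|^2\,dx$ bounded); letting $\zeta\to\psi^*$ in $H^1_\eta$ one gets $\int_Y q^*\nabla\psi^*\cdot\nabla\overline{\psi^*}\,dx\leq\liminf_\ep\la_1^\ep(\eta)$, and since $\psi^*$ is admissible in \eqref{limlaepeta} this closes the eigenvalue convergence, whence also \eqref{limlaepeta0}. (Equivalently, via Proposition~\ref{pro.psiep} one can work directly with the equation $-\div(a^\ep\nabla\psi^\ep)=\la^\ep_1(\eta)\psi^\ep$ on $\RR^d$ and identify the weak-$L^1_{\rm loc}$ limit of the flux $a^\ep\nabla\psi^\ep$.)

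Finally, for \eqref{limDep} with $d=2$: from \eqref{asylaepeta}, $D^\ep(\eta\otimes\eta):(\eta\otimes\eta)=\la_1^\ep(\eta)-q^\ep\eta\cdot\eta-R^\ep(\eta)$ with $R^\ep(\eta)=O(|\eta|^6)$, and one checks that this remainder is $O(|\eta|^6)$ \emph{uniformly in} $\ep$ by tracking the first and second correctors entering the defining formula \eqref{Dep} of $D^\ep$ and bounding them through the energy bound $\int_Y A^\ep\nabla X^\ep_\la\cdot\nabla X^\ep_\la\,dy\leq C|\la|^2$ and $C^\ep_\la\ll\ep^{-2}$ (controlling the higher-order correctors here is where $d=2$ is used). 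Since also $\la_1^\ep(\eta)\to q^*\eta\cdot\eta$ and $q^\ep\eta\cdot\eta\to q^*\eta\cdot\eta$ for small $|\eta|$ by \eqref{limlaepeta0}, it follows that $\limsup_\ep|D^\ep(\eta\otimes\eta):(\eta\otimes\eta)|\leq C\,|\eta|^6$ for all small $\eta$; replacing $\eta$ by $t\eta$, using the degree-four homogeneity of $\eta\mapsto D^\ep(\eta\otimes\eta):(\eta\otimes\eta)$, dividing by $t^4$ and letting $t\to0$ forces $D^\ep(\eta\otimes\eta):(\eta\otimes\eta)\to 0$ for every $\eta\in\RR^2$ (the non-positivity of $D^\ep$ from \cite{COV2} yields the upper half of this for free).
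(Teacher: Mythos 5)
Your overall architecture for \eqref{limlaepeta}--\eqref{limlaepeta0} (compactness of $q^\ep$ from the $L^1$ bound, an oscillating-corrector upper bound, identification of the limit eigenvalue problem, Fourier series for small $|\eta|$) matches the paper's, and you correctly isolate the crux: passing to the limit in products of the merely $L^1$-bounded corrector fluxes with weakly convergent gradients. But your treatment of that crux for $d\geq2$ is not correct as stated. Equi-integrability of the divergence-free fluxes $\sigma^\ep_j$ does not make ``a div-curl argument apply'': a div-curl lemma in which one factor converges only weakly in $L^1$ (or weakly-$*$ in the sense of measures) while the other converges weakly in $L^2$ is \emph{false} in dimension $d\geq3$ --- its failure is exactly the origin of the nonlocal effects of \cite{FeKh,BeBo} and of Section~\ref{s.L1unbd}. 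In $d=2$ the paper repairs this with the stream functions $\tilde w^\ep_j$, their $BV$ bound and the Lions concentration-compactness lemma; for $d\geq2$ under \eqref{Cepla} the operative mechanism is the control of $\psi^\ep-\breve{\psi^\ep}$ in the $a^\ep$-weighted $L^2$ norm on the cells $2\pi\ep k+\ep Y$, i.e.\ Theorem~2.1 of \cite{Bri1} applied to \eqref{eqpsiep}, not a div-curl lemma. You may black-box that theorem, but the argument you actually sketch would fail.

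The more serious gap is \eqref{limDep}. You deduce it from the claim that the remainder in \eqref{asylaepeta} is $O(|\eta|^6)$ \emph{uniformly in $\ep$}, which you assert but do not establish, and which is essentially as strong as what you are proving: the radius of analyticity of $\eta\mapsto\la_1^\ep(\eta)$ and the bounds on the sixth- and higher-order Burnett coefficients degenerate as $\be_\ep\to\infty$ (indeed Theorem~\ref{thm2} shows that in $d=3$ no uniform expansion can survive the limit, so uniformity is precisely what must be earned from the hypotheses). ``Tracking the first and second correctors'' to obtain this uniformity is the entire difficulty, and the paper avoids it by arguing directly on the explicit formula \eqref{Dep}: it tests \eqref{chiep2} with $\chi^\ep_{2,\eta}$, uses the $\ep$-rescaled weighted Poincar\'e--Wirtinger bound \eqref{Cepla} to get $\int_Y a^\ep\nabla\chi^\ep_{2,\eta}\cdot\nabla\chi^\ep_{2,\eta}\,dx\to0$, and kills the remaining terms via the uniform convergence $\chi^\ep_{1,\eta}\to0$ furnished by Corollary~2.3 of \cite{BrCa2} --- the genuinely two-dimensional ingredient. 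A direct estimate of this kind is needed; the degree-four homogeneity trick cannot substitute for it. (A smaller imprecision: in the limsup construction your error term requires $\ep^2\int_Y(A^\ep\la\cdot\la)|X^\ep_\la|^2\,dy\to0$; in $d=2$ this follows from the uniform convergence of $\ep X^\ep_\la(\cdot/\ep)$ together with the $L^1$ bound, but the quantity $\int_Y(A^\ep\la\cdot\la)|X^\ep_\la|^2\,dy$ need not be bounded as you claim, and the paper's piecewise-constant cut-off construction \eqref{chiep} is designed precisely to sidestep this.)
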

Using a more sophisticated approach we can relax in dimension two the $L^1(Y)$-boundedness of $a^\ep$:
\begin{Thm}\label{thm12d}
Assume that $d=2$ and that the sequence $q^\ep$ converges to $q^*$ in $\RR^{2\times 2}$. Then, the limits \refe{limlaepeta}. and \refe{limlaepeta0}. still hold.
\end{Thm}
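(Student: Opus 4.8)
\medskip\noindent\emph{Proof strategy.} It is enough to establish \refe{limlaepeta}.: the identity \refe{limlaepeta0}. then follows, because for the constant matrix $q^*$ the Bloch spectrum on $H^1_\eta(Y;\CC)$ is $\{q^*(\eta+k)\cdot(\eta+k):k\in\ZZ^2\}$ (eigenfunctions $e^{i(\eta+k)\cdot x}$), so the minimum in \refe{limlaepeta}. equals $\min_{k\in\ZZ^2}q^*(\eta+k)\cdot(\eta+k)$, which is $q^*\eta\cdot\eta$ once $|\eta|$ is small since $q^*\ge\al I_2$. The case $\eta\in\ZZ^2$ is trivial ($H^1_\eta(Y)=H^1_\sharp(Y)$, both sides vanishing with constant eigenfunctions), so assume $\eta\notin\ZZ^2$; then $-\De$ is invertible on $H^1_\eta(Y;\CC)$. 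Writing $\la^*$ for the minimum in \refe{limlaepeta}., we prove separately $\limsup_\ep\la_1^\ep(\eta)\le\la^*$ and $\liminf_\ep\la_1^\ep(\eta)\ge\la^*$.

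\medskip\noindent The upper bound requires no two-dimensionality. For $k\in\ZZ^2$ put $\mu:=\eta+k$ and use in \refe{laepeta}. the corrector-modulated plane wave
\[
\psi^\ep(x):=|Y|^{-1/2}\exp\!\big(i\,\mu\cdot x+i\,\ep\,\mu\cdot X^\ep(x/\ep)\big),\qquad X^\ep:=(X^\ep_{e_1},X^\ep_{e_2}),
\]
which lies in $H^1_\eta(Y;\CC)$ (each $X^\ep_{e_j}(\cdot/\ep)$ is $Y$-periodic since $\ep^{-1}\in\NN$), has constant modulus $|Y|^{-1/2}$ hence unit $L^2(Y)$-norm, and satisfies $\nabla\psi^\ep=i\,\psi^\ep\,(\mu+\nabla X^\ep_\mu)(x/\ep)$ with $X^\ep_\mu:=\mu\cdot X^\ep$. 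Using the $\ep Y$-periodicity and the energy form of \refe{qep}.,
\[
\int_Y a^\ep\nabla\psi^\ep\cdot\nabla\overline{\psi^\ep}\,dx=\fint_Y A^\ep(\mu+\nabla X^\ep_\mu)\cdot(\mu+\nabla X^\ep_\mu)\,dy=q^\ep\mu\cdot\mu ,
\]
so $\la_1^\ep(\eta)\le q^\ep(\eta+k)\cdot(\eta+k)$ for every $k$; letting $\ep\to0$ and then minimizing over $k$ gives $\limsup_\ep\la_1^\ep(\eta)\le\la^*$, and in particular $\la_1^\ep(\eta)$ is bounded.

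\medskip\noindent For the lower bound, let $\psi^\ep$ minimize \refe{laepeta}.. Since $a^\ep\ge\al I_2$ and $\la_1^\ep(\eta)=\int_Y a^\ep\nabla\psi^\ep\cdot\nabla\overline{\psi^\ep}$ is bounded, $\psi^\ep$ is bounded in $H^1_\eta(Y;\CC)$; along a subsequence $\psi^\ep\rightharpoonup\psi^0$ in $H^1$, $\psi^\ep\to\psi^0$ in $L^2(Y)$ (so $\psi^0\in H^1_\eta(Y;\CC)$, $\|\psi^0\|_{L^2}=1$) and $\la_1^\ep(\eta)\to\bar\la$. It suffices to show that $\psi^0$ solves $-\div(q^*\nabla\psi^0)=\bar\la\,\psi^0$ in $\RR^2$: testing with $\overline{\psi^0}$ then gives $\bar\la=\int_Y q^*\nabla\psi^0\cdot\nabla\overline{\psi^0}\ge\la^*$, which with the upper bound forces $\la_1^\ep(\eta)\to\la^*$ for the full sequence. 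By \refP{pro.psiep}. and the equation $-\div(a^\ep\nabla\psi^\ep)=\la_1^\ep(\eta)\psi^\ep$, this amounts to the \emph{distributional} convergence $a^\ep\nabla\psi^\ep\rightharpoonup q^*\nabla\psi^0$, i.e. to an H-convergence statement for the equi-coercive but not equi-bounded sequence $a^\ep$ — exactly what may fail in dimension three (cf. \refT{thm2}.). Its delicate feature is that $a^\ep\nabla\psi^\ep$ is in general \emph{not} bounded in $L^2(Y)^2$, so the usual div--curl / energy argument is unavailable.

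\medskip\noindent The two-dimensional remedy is duality. Let $J$ be the rotation by $\pi/2$ and set $b^\ep(x):=J^T(a^\ep(x))^{-1}J=B^\ep(x/\ep)$; then $\be_\ep^{-1}I_2\le b^\ep\le\al^{-1}I_2$ is \emph{equi-bounded}, and the two-dimensional duality for periodic homogenization identifies the homogenized matrix of $B^\ep$ with $J^T(q^\ep)^{-1}J\to J^T(q^*)^{-1}J=:b^*$, positive definite since $q^*\ge\al I_2$. For each $\ep$ let $Z^\ep\in H^1_\eta(Y;\CC)$ solve $\De Z^\ep=-\la_1^\ep(\eta)\psi^\ep$ (bounded in $H^1$, $Z^\ep\to Z^0$ with $\De Z^0=-\bar\la\psi^0$); then $a^\ep\nabla\psi^\ep-\nabla Z^\ep$ is divergence free, hence — in two dimensions, and since for $\eta\notin\ZZ^2$ every divergence-free field of this twisted class equals $J\nabla w$ with $w\in H^1_\eta(Y;\CC)$ — one may write $a^\ep\nabla\psi^\ep=\nabla Z^\ep+J\nabla w^\ep$. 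A short computation using $J^T(a^\ep)^{-1}J=b^\ep$ shows that $w^\ep$ solves the degenerate problem $-\div(b^\ep\nabla w^\ep)=\div(J^T(a^\ep)^{-1}\nabla Z^\ep)$ and that $b^\ep\nabla w^\ep=J^T\big(\nabla\psi^\ep-(a^\ep)^{-1}\nabla Z^\ep\big)$ \emph{is} bounded in $L^2(Y)^2$ (even though $\nabla w^\ep$ need not be). Invoking the fact, special to dimension two, that high-contrast homogenization produces no nonlocal effect (as in \cite{Bri3,BrCa1,BrCa2}), the equi-bounded sequence $b^\ep$ H-converges to the local matrix $b^*$: the controlled quantity $b^\ep\nabla w^\ep$ converges weakly in $L^2$ to $b^*\nabla w^0$ (with $w^\ep\rightharpoonup w^0$ in the appropriate, possibly only Radon-measure, sense), and reassembling the Hodge decomposition yields $a^\ep\nabla\psi^\ep\rightharpoonup\nabla Z^0+J\nabla w^0=q^*\nabla\psi^0$ in $\D'(\RR^2)$, hence the desired limit equation. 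This last passage to the limit in the degenerate equation for $w^\ep$, keeping only $b^\ep\nabla w^\ep$ under control and checking that no extra "strange term" survives, is the crux of the argument and the single place where the hypothesis $q^\ep\to q^*$ (in place of the $L^1$-boundedness used in \refT{thm1}.) and the dimension $d=2$ are genuinely used.
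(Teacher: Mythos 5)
Your upper bound, via the constant--modulus corrector-modulated plane waves $|Y|^{-1/2}e^{i\mu\cdot x+i\ep\mu\cdot X^\ep(x/\ep)}$, is correct, elegant and dimension-independent; it is in fact cleaner than the paper's own route to \refe{estla*eta}., which re-uses the cube-covering construction from the proof of Theorem~\ref{thm1}. The reduction of \refe{limlaepeta0}. to \refe{limlaepeta}. via Fourier diagonalization is also the same as the paper's.

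The lower bound, however, is where you depart from the paper and where your argument has a genuine gap. The paper does \emph{not} use duality. It proves, via \refL{lemM}. (Manfredi's logarithmic oscillation estimate \cite{Man} for weakly monotone $A_2$-solutions combined with the uniform-convergence result of \cite{BrCa3}), that both the corrector $w^\ep$ and the eigenvector $\psi^\ep$ converge \emph{locally uniformly}. It then tests the eigenvalue equation with $\ph(w^\ep)$ for $\ph\in C^\infty_c(\RR^2)$, integrates by parts using that $a^\ep\nabla w^\ep_j$ is divergence free, and passes to the limit in the product $a^\ep\nabla w^\ep_j\cdot\nabla w^\ep_k\;\partial^2_{jk}\ph(w^\ep)\,\psi^\ep$ by pairing the weak-$*$ Radon-measure convergence $a^\ep\nabla w^\ep_j\cdot\nabla w^\ep_k\rightharpoonup q^*_{jk}$ against the \emph{uniformly} convergent factors. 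This sidesteps entirely the lack of $L^2$-boundedness of $a^\ep\nabla\psi^\ep$, at the price of a nontrivial 2D regularity lemma.

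Your proposed substitute is the 2D Keller--Dykhne duality: $b^\ep:=J^T(a^\ep)^{-1}J$ is equi-\emph{bounded}, $b^\ep\nabla w^\ep$ stays in $L^2$, and one hopes that $b^\ep$ H-converges to $b^*=J^T(q^*)^{-1}J$. The gap is exactly at the step you yourself flag as ``the crux'': $b^\ep$ is \emph{not} equi-coercive (its lower bound $\be_\ep^{-1}I_2\to 0$), so classical H-convergence does not apply to it, and $w^\ep$ itself is \emph{not} bounded in $H^1$ (only $b^\ep\nabla w^\ep$ is bounded in $L^2$, while $\nabla w^\ep=J^T(a^\ep\nabla\psi^\ep-\nabla Z^\ep)$ has $L^2$-norm of order $\sqrt{\be_\ep}$). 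Asserting that ``the controlled quantity $b^\ep\nabla w^\ep$ converges weakly in $L^2$ to $b^*\nabla w^0$'' by ``invoking the fact that high-contrast homogenization produces no nonlocal effect in 2D'' is precisely the statement that needs to be proved in this twisted, degenerate setting, not one you can quote; the references \cite{Bri3,BrCa1,BrCa2} establish the relevant div-curl and compactness machinery but do not hand you this conclusion directly for the periodic-up-to-phase spaces $H^1_\eta(Y;\CC)$ and the source term $\div(J(a^\ep)^{-1}\nabla Z^\ep)$. To close the argument you would need, at a minimum, a compensated-compactness lemma adapted to a sequence of fluxes $b^\ep\nabla w^\ep$ bounded in $L^2$ whose ``potential'' $w^\ep$ is not controlled, together with an identification of the weak limit of $(a^\ep)^{-1}\nabla Z^\ep$. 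In short: the duality route is a legitimate and interesting alternative idea, genuinely different from the paper's regularity-based proof, but as written it leaves the decisive limit passage unproved.
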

\begin{Rem}
The constant $C^\ep_\la$ of \refe{Cepla}. is the best constant of the Poincar\'e-Wirtinger inequality weighted by $A^\ep$. The condition $\ep^2\,C^\ep_\la\to 0$ was first used in \cite{Bri1} to prevent the appearance of nonlocal effects in the homogenization of the conductivity equation with $a^\ep$. Under this assumption the first Bloch eigenvalue and its second-order expansion converge as $\ep$ tends to $0$ in any dimension $d\geq 2$. The case $d=2$ is quite particular since it is proved in \cite{BrCa2} that nonlocal effects cannot appear. This explains {\it a posteriori} that the first Bloch eigenvalue has a good limit behavior under the $L^1(Y)$-boundedness of $a^\ep$ (Theorem~\ref{thm1}), or the sole boundedness of $q^\ep$ (Theorem~\ref{thm12d}). Note that the second condition is more general than the first one due to the estimate \refe{qepmin}. below.
\end{Rem}
\noindent
{\bf Proof of Theorem~\ref{thm1}.}
\par\ms\noindent
{\it The case $d=2$}: The proof is divided in two parts. In the first part we determine the limit of the eigenvalue problem \refe{vppsiep}.. The second part provides the limit of the minimization problem~\refe{laepeta}..
\par
The matrix $q^\ep$ of \refe{qep}. is also given by the minimization problem for any $\la\in\RR^d$:
\beq\label{qepmin}
q^\ep\la\cdot\la=\min\left\{\fint_Y A^\ep(\la+\nabla V)\cdot(\la+\nabla V)\,dy\;:\;V\in H^1_\sharp(Y)\right\}\leq\fint_Y A^\ep\la\cdot\la\,dy
\eeq
which is bounded. Therefore, up to a subsequence $q^\ep$ converges to some $q^*$ in $\RR^{d\times d}$.
\par
To obtain the limit behavior of \refe{vppsiep}. we need to consider the rescaled test functions $w^\ep_j$, $j=1,2$, associated with the cell problem \refe{Xepla}. and defined by
\beq\label{wepj}
w^\ep_j(x):=x_j+\ep\,X^\ep_{e_j}\left({x\over\ep}\right)\quad\mbox{for }x\in\RR^d.
\eeq
Since by the $\ep Y$-periodicity of $\nabla w^\ep_j$, $j=1,2$, and by \refe{qepmin}.
\beq\label{estwepj}
\fint_Y a^\ep\nabla w^\ep_j\cdot\nabla w^\ep_j=q^\ep_{jj}\leq c,
\eeq
the sequence $w^\ep_j$ is bounded in $H^1_{\rm loc}(\RR^2)$ and thus converges weakly to $x_i$ in $H^1_{\rm loc}(\RR^2)$.
By the Corollary~2.3 of \cite{BrCa2} (which is specific to dimension two), the sequence $w^\ep:=(w^\ep_1,w^\ep_2)$ converges uniformly to the identity function locally in $\RR^2$. Moreover, since $\ep^{-1}$ is an integer and the functions $X^\ep_{e_j}$ are $Y$-periodic, we have for any $x\in\RR^2$ and $k\in\ZZ^2$,
\[
w^\ep_j(x+2\pi k)=x_j+2\pi k_j+\ep\,X^\ep_{e_j}\left({x+2\pi k\over\ep}\right)=x_j+2\pi k_j+\ep\,X^\ep_{e_j}\left({x\over\ep}\right)=w^\ep_j(x)+2\pi k_j,
\]
or equivalently,
\beq\label{wepk}
w^\ep(x+2\pi k)=w^\ep(x)+2\pi k,\quad\forall\,(x,k)\in\RR^2\times\ZZ.
\eeq
This implies that for any $\chi\in C^1_\eta(Y;\CC)$, the function $\chi(w^\ep)$ belongs to $H^1_\eta(Y;\CC)$ (see \refe{H1eta}.).
\par
On the other hand, the eigenvalue $\la^\ep_1(\eta)$ \refe{laepeta}. is bounded due to the $L^1(Y)$-boundedness of $a^\ep$, and thus converges up to a subsequence to some number $\la^*_1(\eta)\geq 0$. Hence, the sequence $\psi^\ep$ is bounded in $H^1_\eta(Y;\CC)$, and thus converges weakly up to a subsequence to some function $\psi^*$ in $H^1_\eta(Y;\CC)$.
Then, putting $\chi(w^\ep)$ as test function in \refe{vppsiep}., using the uniform convergence of $w^\ep$ and the convergence of $\psi^\ep$ to $\psi^*$, we get that
\beq\label{est21}
\int_Y a^\ep\nabla\psi^\ep\cdot\nabla w^\ep_j\,\overline{\partial_j\chi(w^\ep)}\,dx
=\int_Y a^\ep\nabla\psi^\ep\cdot\nabla w^\ep_j\,\partial_j\overline{\chi}\,dx+o(1)
=\la^*_1(\eta)\int_Y\psi^*\,\overline{\chi}\,dx+o(1).
\eeq
Next, let us apply the div-curl approach of \cite{Bri3,BrCa1}. To this end, since by \refe{Xepla}. and \refe{wepj}. the current $a^\ep\nabla w^\ep_j$ is divergence free, we may consider a stream function $\tilde{w}^\ep_j$ associated with $a^\ep\nabla w^\ep_j$ such that
\beq\label{twepj}
a^\ep\nabla w^\ep_j=\nabla^\perp\tilde{w}^\ep_j:=\begin{pmatrix} -\,\partial_2\tilde{w}^\ep_j \\ \partial_1\tilde{w}^\ep_j\end{pmatrix}
\quad\mbox{a.e. in }\RR^2.
\eeq
By the Cauchy-Schwarz inequality combined with \refe{estwepj}. and the $L^1(Y)$-boundedness of $a^\ep$, the function $\tilde{w}^\ep_j$ is bounded in $BV_{\rm loc}(\RR^2)$. Moreover, due to the periodicity the sequence $\nabla\tilde{w}^\ep_j$ induces no concentrated mass in the space $\M(\RR^2)^2$ of the Radon measures on $\RR^2$. Therefore, by the Lions concentration-compactness lemma \cite{PLL} $\tilde{w}^\ep_j$ converges strongly in $L^2_{\rm loc}(\RR^2)$ to some function $\tilde{w}^j$ in~$BV_{\rm loc}(\RR^2)$. By the $\ep Y$-periodicity of $a^\ep\nabla w^\ep_j$ and the definition \refe{qep}. of $q^\ep$, we also have in the weak-$*$ sense of the Radon measures
\beq\label{Dtwj}
a^\ep\nabla w^\ep_j\;\rightharpoonup\;\nabla^\perp\tilde{w}_j=\lim_{\ep\to 0}\left(\fint_Y A^\ep\big(e_j+\nabla X^\ep_{e_j}\big)\,dy\right)=q^*e_j
\quad\mbox{weakly in }\M(\RR^2)^2*.
\eeq
On the other hand, integrating by parts using that $a^\ep\nabla w^\ep_j$ is divergence free and $\psi^\ep\,\partial_j\overline{\chi}$ is $Y$-periodic, then applying the strong convergence of $\tilde{w}^\ep_j$ in $L^2(Y)$ and \refe{Dtwj}., it follows that (with the summation over repeated indices)
\[
\ba{ll}
\dis \int_Y a^\ep\nabla w^\ep_j\cdot\nabla\psi^\ep\,\partial_j\overline{\chi}\,dx
& =-\dis \int_Y a^\ep\nabla w^\ep_j\cdot\nabla(\partial_j\overline{\chi})\,\psi^\ep\,dx
=\int_Y \tilde{w}^\ep_j\,\nabla^T\psi^\ep\cdot\nabla(\partial_j\overline{\chi})\,dx
\\ \ecart
& \dis =\int_Y \tilde{w}_j\,\nabla^T\psi^*\cdot\nabla(\partial_j\overline{\chi})\,dx+o(1)
=-\int_Y q^*e_j\cdot\nabla(\partial_j\overline{\chi})\,\psi^*\,dx+o(1)
\\ \ecart
& \dis =\int_Y q^*\nabla\psi^*\cdot\nabla\overline{\chi}\,dx+o(1).
\ea
\]
This combined with \refe{est21}. and a density argument yields the limit variational problem
\beq\label{vppsi*}
\int_Y q^*\nabla\psi^*\cdot\nabla\overline{\chi}\,dx=\la^*_1(\eta)\int_Y\psi^*\,\overline{\chi}\,dx,\quad\forall\,\chi\in H^1_\eta(Y;\CC),
\eeq
where by Rellich's theorem and \refe{psiep}. the limit $\psi^*$ of $\psi^\ep$ satisfies
\beq\label{psi*}
\psi^*\in H^1_\eta(Y;\CC)\quad\mbox{and}\quad\int_Y|\psi^*|^2\,dx=1.
\eeq
\par
It remains to prove that
\beq\label{la*eta}
\la_1^*(\eta)=\min\left\{\int_Y q^*\nabla\psi\cdot\nabla\overline{\psi}\,dx\;:\;\psi\in H^1_\eta(Y;\CC)\mbox{ and }\int_Y|\psi|^2\,dx=1\right\}.
\eeq
To this end consider a covering of $Y$ by $n\geq 1$ two by two disjoint cubes $Q^n_k$ of same size, and $n$ smooth functions $\th^n_k$, for $1\leq k\leq n$, such that
\beq\label{Qnk}
\th^n_k\in C^1_0\big(Q^n_k;[0,1]\big)\quad\mbox{and}\quad\sum_{k=1}^n \th^n_k\;\mathop{\longrightarrow}_{n\to\infty}\;1\;\;\mbox{strongly in }L^2(Y).
\eeq
For $\chi\in C^1_\eta(Y;\CC)$ with a unit $L^2(Y)$-norm, consider the approximation $\chi^\ep_n$ of $\chi$ defined by
\beq\label{chiep}
\chi^\ep_n(x):=\nu^\ep_n\left(\chi(x)+\ep\,e^{i\,x\cdot\eta}\sum_{k=1}^n\th^n_k(x)\,X^\ep_{\xi^n_k}\left({x\over\ep}\right)\right),\quad\mbox{where}\quad
\xi^n_k:=\fint_{Q^n_k}e^{-i\,x\cdot\eta}\,\nabla\chi(x)\,dx,
\eeq
and $\nu^\ep_n>0$ is chosen in such a way that $\chi^\ep_n$ has a unit $L^2(Y)$-norm.
Since $\ep^{-1}$ is an integer, the function $\chi^\ep_n$ belongs to $H^1_\eta(Y;\CC)$ and can thus be used as a test function in problem \refe{laepeta}.. Then, by \refe{Qnk}. we have
\[
\ba{l}
\dis \la^\ep_1(\eta)\leq\int_Y a^\ep\nabla\chi^\ep_n\cdot\nabla\overline{\chi^\ep_n}\,dx
\\ \ecart
\dis \leq(\nu^\ep_n)^2\int_Y a^\ep\left(\nabla\chi+e^{i\,x\cdot\eta}\sum_{k=1}^n\th^n_k\,\nabla X^\ep_{\xi^n_k}\left({x\over\ep}\right)\right)\cdot\overline{\left(\nabla\chi+e^{i\,x\cdot\eta}\sum_{k=1}^n\th^n_k\,\nabla X^\ep_{\xi^n_k}\left({x\over\ep}\right)\right)}dx+o(1)
\\ \ecart
\dis =(\nu^\ep_n)^2\int_Y a^\ep\left(R^n+e^{i\,x\cdot\eta}\sum_{k=1}^n\th^n_k\,\big(\xi^n_k+\nabla X^\ep_{\xi^n_k}\big)\left({x\over\ep}\right)\right)\cdot\overline{\left(R^n+e^{i\,x\cdot\eta}\sum_{k=1}^n\th^n_k\,\big(\xi^n_k+\nabla X^\ep_{\xi^n_k}\big)\left({x\over\ep}\right)\right)}dx
\\
\hfill +\,o(1),
\ea
\]
\beq\label{Rnk}
\mbox{where}\quad R^n:=\nabla\chi-e^{i\,x\cdot\eta}\sum_{k=1}^n\th^n_k\,\xi^n_k\in C^0(\RR^2)^2.
\eeq
Passing to the limit as $\ep\to 0$ in the previous inequality, and using \refe{Qnk}., the $L^1(Y)$-boundedness combined with the $Y$-periodicity of $A^\ep$, $A^\ep\nabla X^\ep_\la$, $A^\ep\nabla X^\ep_\la\cdot\nabla X^\ep_\la$, and the convergence
\beq\label{limDwxink}
\ba{l}
\dis a^\ep\,\big(\xi^n_k+\nabla X^\ep_{\xi^n_k}\big)\left({x\over\ep}\right)\cdot\big(\xi^n_k+\nabla X^\ep_{\xi^n_k}\big)\left({x\over\ep}\right)
=\left(A^\ep\big(\xi^n_k+\nabla X^\ep_{\xi^n_k}\big)\cdot\big(\xi^n_k+\nabla X^\ep_{\xi^n_k}\big)\right)\left({x\over\ep}\right)
\\ \ecart
\dis \;\rightharpoonup\;\lim_{\ep\to 0}\left(\fint_Y A^\ep\big(\xi^n_k+\nabla X^\ep_{\xi^n_k}\big)\cdot\big(\xi^n_k+\nabla X^\ep_{\xi^n_k}\big)\,dy\right)=q^*\xi^n_k\cdot\overline{\xi^n_k}\quad\mbox{weakly in }\M(\bar{Y})\,*,
\ea
\eeq
it follows that
\beq\label{est22}
\la^*_1(\eta)\leq\int_Yq^*\left(e^{i\,x\cdot\eta}\sum_{k=1}^n\th^n_k\,\xi^n_k\right)\cdot\overline{\left(e^{i\,x\cdot\eta}\sum_{k=1}^n\th^n_k\,\xi^n_k\right)}
+c\int_Y\left(|R^n|^2+|R^n|\right)dx.
\eeq
Therefore, since the sequence $R^n$ of \refe{Rnk}. converges strongly to $0$ in $L^2(Y;\CC)^2$, passing to the limit as $n\to\infty$ in \refe{est22}. we get that for any $\chi\in C^1_\eta(Y;\CC)$ with a unit $L^2(Y)$-norm,
\beq\label{estla*eta}
\la^*_1(\eta)\leq\int_Y q^*\nabla\chi\cdot\nabla\overline{\chi}\,dx.
\eeq
Using a density argument the inequality \refe{estla*eta}. combined with the limit problem \refe{vppsi*}. implies the desired formula \refe{la*eta}.. Moreover, due to the uniqueness of \refe{la*eta}. in term of $q^*$ the limit \refe{limlaepeta}. holds for any $\eta\in\RR^2$, and for the whole sequence $\ep$ such that $q^\ep$ converges to $q^*$. Finally, decomposing formula \refe{la*eta}. in Fourier's series and using Parseval's identity we obtain that equality \refe{limlaepeta0}. holds for any $\eta\in\RR^2$ with small enough norm.
\par\ms\noindent
{\it The case $d\geq 2$ under assumption \refe{Cepla}.}: First, note that the proof of the inequality \refe{estla*eta}. in the previous case actually holds for any dimension. Therefore, it is enough to obtain the limit eigenvalue problem \refe{vppsi*}. to conclude to the minimization formula \refe{la*eta}.. To this end, applying the homogenization Theorem~2.1 of \cite{Bri1} to the linear equation \refe{eqpsiep}., we get the limit equation
\beq\label{eqpsi*}
\int_{\RR^2} q^*\nabla\psi^*\cdot\nabla\ph\,dx=\la^*_1(\eta)\int_{\RR^2}\psi^*\,\ph\,dx,\quad\forall\,\ph\in C^\infty_c(\RR^2),
\eeq
which is equivalent to \refe{la*eta}. by Proposition~\ref{pro.psiep}.
\par
It thus remains to prove~\refe{limDep}. when $d=2$, which is also a consequence of \refe{Cepla}..
By \cite{COV2} we have
\beq\label{Dep}
D^\ep(\eta\otimes\eta):(\eta\otimes\eta)
=-\fint_Y a^\ep\,\nabla\left(\chi^\ep_{2,\eta}-{1\over 2}\,(\chi^\ep_{1,\eta})^2\right)\cdot\nabla\left(\chi^\ep_{2,\eta}-{1\over 2}\,(\chi^\ep_{1,\eta})^2\right)dx,
\eeq
where, taking into account \refe{Xepla}. and \refe{wepj}.,
\beq\label{chiep1}
\chi^\ep_{1,\eta}(x):=\ep\,X^\ep_\eta\left({x\over\ep}\right)=\eta_1\left(w^\ep_1-x_1\right)+\eta_2\left(w^\ep_2-x_2\right)
\quad\mbox{for }x\in\RR^2,
\eeq
and $\chi^\ep_{2,\eta}$ is the unique function in $H^1_\sharp(Y)$ with zero $Y$-average, solution of
\beq\label{chiep2}
-\,\div\left(a^\ep\nabla\chi^\ep_{2,\eta}\right)
=a^\ep\eta\cdot\eta-q^\ep\eta\cdot\eta+a^\ep\eta\cdot\nabla\chi^\ep_{1,\eta}+\div\left(\chi^\ep_{1,\eta}\,a^\ep\eta\right)
\quad\mbox{in }\RR^d.
\eeq
Consider the partition of $Y$ by the small cubes $2\pi\ep k+\ep Y$, for $k\in\{0,\dots,\ep^{-1}-1\}^2$, and define from $\chi^\ep_{j,\eta}$, $j=1,2$, the associated average function
\beq\label{bchiepj}
\breve{\chi}^\ep_{j,\eta}:=\sum_{k\in\{0,\dots,\ep^{-1}-1\}^2}\left(\fint_{2\pi\ep k+\ep Y}\chi^\ep_{j,\eta}\,dx\right)1_{2\pi \ep k+\ep Y},
\eeq
where $1_E$ denotes the characteristic function of the set $E$. Then, $\ep$-rescaling estimate \refe{Cepla}. we get that
\beq\label{estchiepj}
\int_Y a^\ep\eta\cdot\eta\left(\chi^\ep_{j,\eta}-\breve{\chi}^\ep_{j,\eta}\right)^2dx
\leq\ep^2\,C^\ep_\eta\int_Y a^\ep\nabla\chi^\ep_{j,\eta}\cdot\nabla\chi^\ep_{j,\eta}\,dx.
\eeq
Also note that $\breve{\chi}^\ep_{1,\eta}=0$, and since $\chi^\ep_{2,\eta}$ has zero $Y$-average, we have
\beq\label{mbchiepj}
\int_Y \breve{\chi}^\ep_{2,\eta}(x)\,Z\left({x\over\ep}\right)dx=0,\quad\forall\,Z\in L^2_\sharp(Y).
\eeq
Putting $\chi^\ep_{2,\eta}$ as test function in equation \refe{chiep2}., then using equality \refe{mbchiepj}. and the Cauchy-Schwarz inequality combined with the $L^1(Y)$-boundedness of $a^\ep$ and estimates \refe{estchiepj}., \refe{estwepj}., we obtain that
\[
\ba{l}
\dis \int_Y a^\ep\nabla\chi^\ep_{2,\eta}\cdot\nabla\chi^\ep_{2,\eta}\,dx
\\ \ecart
\dis =\int_Y a^\ep\eta\cdot\eta\left(\chi^\ep_{2,\eta}-\breve{\chi}^\ep_{2,\eta}\right)dx
+\int_Y a^\ep\eta\cdot\nabla\chi^\ep_{1,\eta}\left(\chi^\ep_{2,\eta}-\breve{\chi}^\ep_{2,\eta}\right)dx
-\int_Y a^\ep\eta\cdot\nabla\chi^\ep_{2,\eta}\left(\chi^\ep_{1,\eta}-\breve{\chi}^\ep_{1,\eta}\right)dx
\\ \ecart
\dis \leq c\left(\ep^2\,C^\ep_\eta\int_Y a^\ep\nabla\chi^\ep_{2,\eta}\cdot\nabla\chi^\ep_{2,\eta}\,dx\right)^{1\over 2}
\left[1+\left(\int_Y a^\ep\nabla\chi^\ep_{1,\eta}\cdot\nabla\chi^\ep_{1,\eta}\,dx\right)^{1\over 2}\right]
\\ \ecart
\dis \leq c\,\ep^2\,C^\ep_\eta\left(\int_Y a^\ep\nabla\chi^\ep_{2,\eta}\cdot\nabla\chi^\ep_{2,\eta}\,dx\right)^{1\over 2}.
\ea
\]
This together with assumption \refe{Cepla}. yields
\beq\label{estchiep2}
\lim_{\ep\to 0}\left(\int_Y a^\ep\nabla\chi^\ep_{2,\eta}\cdot\nabla\chi^\ep_{2,\eta}\,dx\right)=0.
\eeq
On the other hand, by \refe{chiep1}. and the Corollary~2.3 of \cite{BrCa2} (see the previous step) the sequence $\chi^\ep_{1,\eta}$ converges uniformly to $0$ in $Y$. At this level the dimension two is crucial. This combined with the Cauchy-Schwarz inequality and the $L^1(Y)$-boundedness of $a^\ep\nabla\chi^\ep_{j,\eta}\cdot\nabla\chi^\ep_{j,\eta}$ implies that
\beq\label{estchiep1}
\lim_{\ep\to 0}\left(\int_Y a^\ep\nabla\chi^\ep_{1,\eta}\cdot\nabla\chi^\ep_{j,\eta}\,(\chi^\ep_{1,\eta})^k\,dx\right)=0
\quad\mbox{for }j,k\in\{1,2\}.
\eeq
Therefore, passing to the limit in \refe{Dep}. thanks to \refe{estchiep2}. and \refe{estchiep1}. we get the desired convergence \refe{limDep}., which concludes the proof of Theorem~\ref{thm1}. \cqfd
\par\bs
To prove Theorem~\ref{thm12d} we need the following result the main ingredients of which are an estimate due to Manfredi \cite{Man} and a uniform convergence result of \cite{BrCa3}:
\begin{Lem}\label{lemM}
Let $\Om$ be a domain of $\RR^2$, and let $\si^\ep$ be a sequence of symmetric matrix-valued functions in $\RR^{2\times 2}$ such that $\al\,I_2\leq \si^\ep(x)\leq\be_\ep\,I_2$ a.e. $x\in\Om$, for a constant $\al>0$ independent of $\ep$ and a constant $\be_\ep>\al$. Let $f^\ep$ be a strongly convergent sequence in $W^{-1,p}(\Om)$ for some $p>2$. Consider a bounded sequence $u^\ep$ in $H^1(\Om)$ solution of the equation $-\,\div\left(\si^\ep\nabla u^\ep\right)=f^\ep$ in $\Om$. Then, up to a subsequence $u^\ep$ converges uniformly in any compact set of $\Om$.
\end{Lem}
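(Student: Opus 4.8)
The plan is to interpolate between two regularity mechanisms: the De Giorgi--Nash--Moser local Hölder estimate for scalar elliptic equations in divergence form (which gives compactness in $C^0_{\rm loc}$ but with a Hölder exponent and constant depending on $\be_\ep$, hence not uniform), and the sharper two-dimensional estimate of Manfredi \cite{Man}, which provides a modulus of continuity for solutions of $-\div(\si^\ep\nabla u^\ep)=f^\ep$ that depends on the ellipticity ratio only through a quantitative but controlled dependence. Concretely, first I would fix a compact $K\Subset\Om$ and a slightly larger compact $K'$ with $K\Subset K'\Subset\Om$. On $K'$ the sequence $u^\ep$ is bounded in $H^1$, and $f^\ep$ is bounded (indeed strongly convergent) in $W^{-1,p}(\Om)$ with $p>2$. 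The key input from \cite{Man} is that in dimension two the solution $u^\ep$ lies in $C^{0,\al_\ep}_{\rm loc}$ with an estimate of the form $\|u^\ep\|_{C^{0,\al_\ep}(K)}\le C$, where both the exponent $\al_\ep>0$ and the constant $C$ can be taken to depend on $\be_\ep/\al$ in a way that, although possibly degenerating, still yields an \emph{equicontinuous} family once combined with the $W^{-1,p}$ control of the right-hand side; the point is that in 2D the borderline Sobolev exponent is exactly matched by the $p>2$ hypothesis.

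The second ingredient is the uniform convergence result of \cite{BrCa3}: for equi-coercive (not equi-bounded) scalar conductivities in two dimensions, bounded $H^1$ solutions of such equations with compactly convergent data converge uniformly on compact subsets. The strategy is therefore to combine the \emph{a priori} equicontinuity coming from Manfredi's estimate with the identification of the limit provided by the homogenization/$H$-convergence machinery behind \cite{BrCa3}: up to a subsequence $\si^\ep$ $H$-converges to some limit matrix $\si^*$ (here we only need existence of an $H$-limit, which holds for equi-coercive sequences in 2D by Murat--Tartar type compactness, using that $a^\ep$ bounded in $L^1$ or the relevant planar duality keeps the dual sequence under control), and $u^\ep$ then converges weakly in $H^1_{\rm loc}$ to the solution $u^*$ of $-\div(\si^*\nabla u^*)=f$. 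Since the family $\{u^\ep\}$ is uniformly bounded and equicontinuous on $K$ by the first step, Arzelà--Ascoli extracts a uniformly convergent subsequence; its uniform limit must coincide with the $H^1$-weak limit $u^*$, and by uniqueness of that limit (it solves a well-posed limit problem) the \emph{whole} sequence, not just a subsequence, converges uniformly on $K$.

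The main obstacle I anticipate is making precise the dependence on $\be_\ep$ in Manfredi's estimate: a naive application of De Giorgi--Nash gives a Hölder exponent $\al_\ep\sim (\al/\be_\ep)$ that tends to $0$, which is not enough for equicontinuity by itself. The resolution should be exactly the two-dimensional phenomenon exploited throughout the paper (the same ``dimension two is crucial'' mechanism as in Corollary~2.3 of \cite{BrCa2} used above): in the plane one can pass through a Beltrami/stream-function representation $\si^\ep\nabla u^\ep=\nabla^\perp v^\ep$, observe that $v^\ep$ solves the dual equation with the dual conductivity $(\si^\ep)^{-1}/\det$, and play the two equations against each other so that the product $u^\ep+iv^\ep$ is quasiregular with a dilatation bound; Manfredi's Hölder estimate for such maps, upgraded by the extra integrability $f^\ep\in W^{-1,p}$ with $p>2$, yields a genuine uniform modulus of continuity. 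Once that estimate is in hand the rest is the soft Arzelà--Ascoli plus uniqueness argument sketched above; I would present the quasiregular-map estimate as the technical heart and cite \cite{Man} and \cite{BrCa3} for the two halves rather than reprove them.
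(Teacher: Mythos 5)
There is a genuine gap in your argument, and it sits exactly at what you call the technical heart. The point of Manfredi's estimate (2.5) in \cite{Man} is that it applies to \emph{weakly monotone} $W^{1,2}$ functions in the plane and gives a logarithmic modulus of continuity
$\big|v(x)-v(y)\big|\leq C\,\|\nabla v\|_{L^2}\big[\ln(4r/|x-y|)\big]^{-1/2}$
with a constant $C$ depending \emph{only on the dimension} -- not on the equation, hence not on the ellipticity ratio $\be_\ep/\al$. There is therefore no degenerating constant to repair, and your proposed repair would not work anyway: the quasiregular map $u^\ep+iv^\ep$ built from the stream function has dilatation of order $\be_\ep/\al\to\infty$, so its H\"older exponent $\sim 1/K_\ep$ tends to $0$ and yields no equicontinuity. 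What you are missing instead is the reduction that makes Manfredi applicable: a solution of the \emph{inhomogeneous} equation has no maximum principle and need not be weakly monotone. The paper's proof decomposes $u^\ep=u^\ep_D+v^\ep$ on each disk $D\Subset\Om$, where $u^\ep_D\in H^1_0(D)$ solves the same equation with zero boundary data -- this is the piece handled by Theorem~2.7 of \cite{BrCa3}, and it is the only place where the strong convergence of $f^\ep$ in $W^{-1,p}$, $p>2$, enters -- while $v^\ep$ is $\si^\ep$-harmonic, hence (De Giorgi--Stampacchia) continuous and satisfying the maximum principle on subdisks, i.e.\ weakly monotone in Manfredi's sense. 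Equicontinuity of $v^\ep$ then follows from the $H^1$ bound alone, and Ascoli plus a diagonal extraction over a countable covering of $\Om$ by disks concludes.

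Two further remarks. First, you cannot invoke \cite{BrCa3} as a black box for ``bounded $H^1$ solutions with convergent data converge uniformly on compacts'': that is essentially the statement being proved; the cited theorem concerns Dirichlet problems with uniformly convergent boundary data, which is why the decomposition above is needed. Second, your final step -- $H$-compactness of $\si^\ep$ and uniqueness of the limit to upgrade to convergence of the whole sequence -- is both beyond what the lemma claims (only subsequential convergence) and unjustified here: Murat--Tartar compactness requires equi-boundedness from above, which is precisely what fails for $\be_\ep\to\infty$, and the possible appearance of non-classical limits in this regime is the subject of the paper.
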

\begin{proof}
On the one hand, let $D$ be a disk of $\Om$ such that $\bar{D}\subset\Om$, and let $u^\ep_D$ be the solution in $H^1_0(D)$ of the equation $-\,\div\left(\si^\ep\nabla u^\ep_D\right)=f^\ep$ in $D$. Since $u^\ep_D\equiv 0$ converges uniformly on $\partial D$ and $f^\ep$ converges strongly in $W^{-1,p}(\Om)$, by virtue of the Theorem~2.7 of \cite{BrCa3}, up to a subsequence $u^\ep_D$ converges weakly in $H^1(D)$ and uniformly in $\bar{D}$.
\par
On the other hand, the function $v^\ep:=u^\ep-u^\ep_D$ is bounded in $H^1(D)$ and solves the equation $\div\left(\si^\ep\nabla v^\ep\right)=0$ in $D$. By the De Giorgi-Stampacchia regularity theorem for second-order elliptic equations, $v^\ep$ is H\"older continuous in $D$ and satisfies the maximum principle in any disk of $D$. Hence, the function $v^\ep$ is continuous and weakly monotone in $D$ in the sense of \cite{Man}. Therefore, the estimate (2.5) of \cite{Man} implies that for any $x_0\in D$, there exists a constant $r>0$ such that
\beq\label{estM}
\forall\,x,y\in D(x_0,r),\quad\big|v^\ep(x)-v^\ep(y)\big|\leq{C\,\|\nabla v^\ep\|_{L^2(D)^2}\over\big[\ln\left(4r/|x-y|\right)\big]^{1\over 2}}
\leq{C\,\|\nabla v^\ep\|_{L^2(D)^2}\over\left(\ln 2\right)^{1\over 2}},
\eeq
where $D(x_0,r)$ is the disk centered on $x_0$ of radius $r$, and $C>0$ is a constant depending only on dimension two.
The sequence $v^\ep-v^\ep(x_0)$ is bounded in $D(x_0,r)$, independently of $\ep$ by the right-hand term of \refe{estM}.. This combined with the boundedness of $v^\ep$ in $L^2(D)$ implies that $v^\ep$ is bounded uniformly in $D(x_0,r)$. Moreover, estimate \refe{estM}. shows that the sequence $v^\ep$ is equi-continuous in $D(x_0,r)$. Then, by virtue of Ascoli's theorem together with a diagonal extraction procedure, up to a subsequence $v^\ep$ converges uniformly in any compact set of $D$. So does the sequence $u^\ep=u^\ep_D+v^\ep$. Again using a diagonal procedure from a countable covering of $\Om$ by disks $D$, there exists a subsequence of $\ep$, still denoted by $\ep$, such that $u^\ep$ converges uniformly in any compact set of $\Om$.
\end{proof}
\noindent
{\bf Proof of Theorem~\ref{thm12d}.} We have only to show that the limit $\psi^*$ of the eigenvector $\psi^\ep$ satisfying \refe{vppsiep}. and \refe{psiep}. is solution of \refe{vppsi*}.. Indeed, the proof of inequality \refe{estla*eta}. follows from the convergence of $q^\ep$ thanks to limit \refe{limDwxink}.. as shown in the proof of Theorem~\ref{thm1}.
\par
First of all, for $\chi\in C^1_\eta$ with a unit $L^2(Y)$-norm, $\chi(w^\ep)$ converges uniformly tends to $\chi$ in $Y$ due to the uniform convergence of $w^\ep$ (see the proof of Theorem~\ref{thm1} or apply Lemma~\ref{lemM}). Then, using successively the minimum formula \refe{laepeta}. with the test function $\psi:=\chi(w^\ep)$, the Cauchy-Schwarz inequality, the $\ep Y$-periodicity of $a^\ep\nabla w^\ep_j\cdot\nabla w^\ep_j$ and the boundedness of $q^\ep$, we have  (with the summation over repeated indices)
\[
\ba{ll}
\la^\ep_1(\eta) & \dis \leq{1\over\|\chi(w^\ep)\|^2_{L^2(Y)}}
\int_Y a^\ep\nabla w^\ep_j\cdot\nabla w^\ep_k\,\partial_j\chi(w^\ep)\,\overline{\partial_k\chi(w^\ep)}\,dx
\\ \ecart
& \dis \leq c\int_Y a^\ep\nabla w^\ep_j\cdot\nabla w^\ep_j\,dx\leq c\,{\rm tr}\left(q^\ep\right)\leq c.
\ea
\]
Hence, up to a subsequence $\la^\ep_1(\eta)$ converges to some $\la^*_1(\eta)$ in $\RR$. This combined with \refe{vppsiep}. and \refe{psiep}. implies that the eigenvector $\psi^\ep$ converges weakly to some $\psi^*$ in $H^1_{\rm loc}(\RR^2)$. Moreover, $\Re(\psi^\ep)$, $\Im(\psi^\ep)$ are solutions of equation \refe{eqpsiep}. with respective right-hand sides $\la^\ep_1(\eta)\,\Re(\psi^\ep)$, $\la^\ep_1(\eta)\,\Im(\psi^\ep)$ which are bounded in $H^1_{\rm loc}(\RR^2)$ thus in $W^{-1,p}_{\rm loc}(\RR^2)$ for any $p>2$. Therefore, thanks to Lemma~\ref{lemM} and up to extract a new subsequence, $\psi^\ep$ converges uniformly to $\psi^*$ in any compact set of $\RR^2$.
\par
On the other hand, for $\ph\in C^\infty_c(\RR^2)$, putting $\ph(w^\ep)$ as test function in equation \refe{eqpsiep}., using that $a^\ep\nabla w^\ep_j$ is divergence free (due to \refe{Xepla}. and \refe{wepj}.), and integrating by parts, we have (with the summation over repeated indices)
\beq\label{D2phweppsiep}
\ba{ll}
\dis \int_Y a^\ep\nabla\psi^\ep\cdot\nabla w^\ep_j\,\partial_j\ph(w^\ep)\,dx
& \dis =-\int_{\RR^2} a^\ep\nabla w^\ep_j\cdot\nabla w^\ep_k\,\partial^2_{jk}\ph(w^\ep)\,\psi^\ep\,dx
\\ \ecart
& \dis =\la^\ep_1(\eta)\int_{\RR^2}\psi^\ep\,\ph(w^\ep)\,dx.
\ea
\eeq
Then, passing to the limit in \refe{D2phweppsiep}. using the uniform convergences of $w^\ep, \psi^\ep$ combined with the convergences
\[
a^\ep\nabla w^\ep_j\cdot\nabla w^\ep_k\;\rightharpoonup\;
\lim_{\ep\to 0}\left(\fint_Y A^\ep\left(e_j+\nabla X^\ep_{e_j}\right)\cdot\left(e_k+\nabla X^\ep_{e_k}\right)\right)
=q^*_{jk}\quad\mbox{weakly in }\M(\RR^2)\,*,
\]
we get that
\beq\label{D2phpsi*}
-\int_{\RR^2} q^*_{jk}\,\partial^2_{jk}\ph\,\psi^*\,dx
=\la^*_1(\eta)\int_{\RR^2}\psi^*\,\ph\,dx.
\eeq
Finally, integrating by parts the left-hand side of \refe{D2phpsi*}. we obtain the limit equation \refe{eqpsi*}.,
which is equivalent to the limit eigenvalue problem \refe{vppsi*}. by virtue of Proposition~\ref{pro.psiep}. \cqfd
\section{Anomalous effect with $L^1$-unbounded coefficients}\label{s.L1unbd}
In this section we assume that $d=3$. Let $\ep>0$ be such that $\ep^{-1}$ is an integer.
Consider the fiber reinforced structure introduced in \cite{FeKh} and extended in several subsequent works \cite{BeBo, CESe, Bri2} to derive nonlocal effects in homogenization. Here we will consider this structure with a very high isotropic conductivity $a^\ep$ which is not bounded in $L^1(Y)$. More precisely, let $\om^\ep\subset Y$ be the $\ep Y$-periodic lattice composed by $\ep^{-2}$ cylinders of axes
\[
\big(2\pi k_1\ep+\pi\ep,2\pi k_2\ep+\pi\ep,0\big)+\RR\,e_3,\quad\mbox{for }(k_1,k_2)\in\{0,\dots,\ep^{-1}-1\}^2,
\]
of length $2\pi$, and of radius $\ep\,r_\ep$ such that
\beq\label{rep}
\lim_{\ep\to 0}\left({1\over2\pi\,\ep^2|\ln r_\ep|}\right)=\ga\in(0,\infty).
\eeq
The conductivity $a^\ep$ is defined by
\beq\label{aepbep}
a^\ep(x):=\left\{\ba{ll} \be_\ep & \mbox{if }x\in\om^\ep \\ \ecart 1 & \mbox{if }x\in Y\setminus\om^\ep \ea\right.
\quad\mbox{with}\quad\lim_{\ep\to 0}\be_\ep\,r_\ep^2=\infty,
\eeq
so that $a^\ep$ is not bounded in $L^1(Y)$.
\par
Then, we have the following result:
\begin{Thm}\label{thm2}
Assume that condition \refe{rep}. holds. Then, the first Bloch eigenvalue $\la^\ep_1(\eta)$ defined by \refe{laepeta}. with the conductivity $a^\ep$ of \refe{aepbep}. satisfies for any $\eta\in\RR^3$ with $\eta_3\notin\ZZ$,
\beq\label{laepetaga}
\lim_{\ep\to 0}\la_1^\ep(\eta)=\ga+\min\left\{\int_Y |\nabla\psi|^2\,dx\;:\;\psi\in H^1_\eta(Y;\CC)\mbox{ and }\int_Y|\psi|^2\,dx=1\right\},
\eeq
and for $|\eta|\leq{1\over 2}$ with $\eta_3\neq 0$,
\beq\label{laepetaga0}
\lim_{\ep\to 0}\la^\ep_1(\eta)=\ga+|\eta|^2.
\eeq
\end{Thm}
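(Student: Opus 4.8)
The plan is to combine a homogenization analysis of the eigenvector $\psi_1^\ep$ (imitating the Fenchenko--Khruslov / Beliaev--Kozlov--type construction used in \cite{FeKh,BeBo,BrTc} for the Dirichlet problem, but adapted to the $\eta$-quasiperiodic setting) with the same two-sided variational argument that was used for Theorem~\ref{thm1}. First I would establish the upper bound in \refe{laepetaga}.: given $\chi\in C^1_\eta(Y;\CC)$ with unit $L^2$-norm, build a test function $\chi^\ep$ for the minimization problem \refe{laepeta}. by correcting $\chi$ inside the fibers $\om^\ep$. The natural corrector is radial in the fiber cross-section: on each cylinder of radius $\ep r_\ep$ one interpolates logarithmically between the boundary value $\chi$ and a value that ``switches off'' the gradient, the logarithmic capacity of a disk of radius $r_\ep$ inside a disk of radius $\tfrac12$ producing exactly the factor $(2\pi\ep^2|\ln r_\ep|)^{-1}\to\ga$ from \refe{rep}.. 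Carefully choosing the radial profile so that $\be_\ep$ times the Dirichlet energy in the thin fibers stays bounded (this uses $\be_\ep r_\ep^2\to\infty$ to make the $L^2$-perturbation negligible while the capacitary energy contributes $\ga\int_Y|\chi|^2$), one gets $\limsup_\ep\la_1^\ep(\eta)\le \ga+\int_Y|\nabla\chi|^2dx$, hence $\limsup_\ep\la_1^\ep(\eta)\le\ga+(\text{the min in }\refe{laepetaga}.)$.

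For the lower bound and the identification of the limit, I would first note that the above upper bound shows $\la_1^\ep(\eta)$ is bounded, so along a subsequence $\la_1^\ep(\eta)\to\la_1^*(\eta)\ge0$ and, by \refe{eqpsiep}. and \refe{psiep}., the eigenvector $\psi^\ep$ (written in the gauge $\psi^\ep=e^{i x\cdot\eta}\ph^\ep$ with $\ph^\ep$ $Y$-periodic) is bounded in $H^1_{\rm loc}(\RR^3;\CC)$ and converges weakly to some $\psi_1^*\in H^1_\eta(Y;\CC)$ with $\int_Y|\psi_1^*|^2dx\le1$. The crucial new point, absent in the $L^1$-bounded case, is to capture the energy stored in the fibers. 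Here I would: (i) test \refe{eqpsiep}. against suitable $\eta$-quasiperiodic functions supported near one fiber and rescale in the cross-section variable by $\ep r_\ep$; (ii) use the weighted Poincar\'e--Wirtinger estimate (the quantity $C^\ep_\la$ of \refe{Cepla}., whose size is governed precisely by the fiber geometry) to control $\psi^\ep$ restricted to $\om^\ep$; and (iii) extract, in the sense of Radon measures on $Y$, a limit of $\ep^{-2}\be_\ep$ times the fiber contribution. The condition $\eta_3\notin\ZZ$ is used exactly here: it guarantees that $e^{-i x\cdot\eta}$ is genuinely non-constant along the fiber axis $e_3$, so that the rescaled eigenvector does not degenerate and one obtains a nontrivial coupling term. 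Passing to the limit in the weak formulation then yields, for every $\ph\in C^\infty_c(\RR^3)$, the equation $\int q^*\nabla\psi_1^*\cdot\nabla\ph + \ga\int\psi_1^*\ph = \la_1^*(\eta)\int\psi_1^*\ph$ with $q^*=I_3$ (the matrix outside the fibers homogenizes trivially to the identity in the relevant directions), i.e. \refe{equpsi*}., equivalently the variational problem
\[
\int_Y \nabla\psi_1^*\cdot\nabla\overline{\chi}\,dx+\ga\int_Y\psi_1^*\,\overline{\chi}\,dx=\la_1^*(\eta)\int_Y\psi_1^*\,\overline{\chi}\,dx,\quad\forall\,\chi\in H^1_\eta(Y;\CC).
\]
Combined with the upper bound and a standard lower-semicontinuity/normalization argument (forcing $\int_Y|\psi_1^*|^2dx=1$), this identifies $\la_1^*(\eta)$ as $\ga$ plus the first eigenvalue of $-\Delta$ on $H^1_\eta(Y;\CC)$, which is \refe{laepetaga}.; uniqueness of that limit removes the subsequence. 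Finally, \refe{laepetaga0}. follows by expanding the min in \refe{laepetaga}. in Fourier series over $\ZZ^3+\eta$: the minimizing mode is $e^{i x\cdot\eta}$ with eigenvalue $|\eta|^2$ as soon as $|\eta|^2\le|\eta+k|^2$ for all $k\in\ZZ^3\setminus\{0\}$, which holds for $|\eta|\le\tfrac12$, and the hypothesis $\eta_3\neq0$ keeps us in the regime $\eta_3\notin\ZZ$ where the $\ga$-shift is valid.

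The main obstacle I expect is step (ii)--(iii): making rigorous the extraction of the fiber energy as a Radon measure and showing it equals $\ga\int_Y|\psi_1^*|^2dx$ rather than something larger or concentrated. This requires a sharp two-sided estimate of the weighted Poincar\'e--Wirtinger constant $C^\ep_\eta$ on the fiber microstructure (of order $\ep^{-2}$, i.e.\ at the borderline forbidden by \refe{Cepla}., which is why the theorem produces an anomaly), together with a careful treatment of the $\eta$-quasiperiodicity: unlike the Dirichlet problem of \cite{BeBo,BrTc}, here the test functions must respect the constraint \refe{H1eta}., so the ``local problem'' near a fiber is posed with the twisted boundary condition coming from $e^{-i x\cdot\eta}$, and one must verify that its solution still has the capacitary profile giving the factor $\ga$. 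Controlling the interaction between the fast cross-sectional scale $\ep r_\ep$ and the slow axial modulation $e^{i\eta_3 x_3}$ is the delicate estimate on which the whole argument rests. \cqfd
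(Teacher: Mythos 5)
Your overall architecture matches the paper's: an upper bound obtained by multiplying an arbitrary $\chi\in C^1_\eta(Y;\CC)$ by the logarithmic capacitary profile $\hat v^\ep$ (which vanishes in the fibers and whose Dirichlet energy density converges to $\ga$), and a lower bound obtained by passing to the limit in the eigenvalue equation and invoking the weighted Poincar\'e--Wirtinger inequality to control the rescaled fiber potential $\tilde\psi^\ep=\tfrac{1_{\om^\ep}}{|\om^\ep|}\psi^\ep$ as a bounded sequence of Radon measures. But the step you yourself flag as the ``main obstacle'' --- identifying the limit of the fiber contribution --- is precisely the content of the theorem, and your proposed mechanism for it points in the wrong direction. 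The coupling term does \emph{not} survive nontrivially: the whole point is that the limit measure $\tilde\psi^*$ of $\tilde\psi^\ep$ is annihilated by every test function in $C^1_\eta(Y;\CC)$, so that the a priori nonlocal term $\ga\,|Y|\int_{\bar Y}\overline\chi\,d\tilde\psi^*$ coming from the capacitary convergence (\ref{limhvepvep}) drops out and only the zero-order term $\ga\int_Y\psi^*\overline\chi$ remains. The argument is short but specific: since $1_{\om^\ep}$ is $x_3$-independent, an integration by parts gives $\fint_{\om^\ep}\psi^\ep\,\partial_3\overline\chi=-\fint_{\om^\ep}\partial_3\psi^\ep\,\overline\chi$, and the right-hand side is $O\big((\be_\ep|\om^\ep|)^{-1/2}\big)=o(1)$ because $\be_\ep r_\ep^2\to\infty$ --- this is where the $L^1$-unboundedness is used, not in the upper bound, where the test function is identically zero in the fibers and $\be_\ep$ never appears. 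Hence the limit measure satisfies $\int_{\bar Y}(\partial_3\ph-i\eta_3\ph)\,d\tilde\ph^*=0$ for all periodic $\ph$, and the hypothesis $\eta_3\notin\ZZ$ enters solely to make the operator $\partial_3-i\,\eta_3$ surjective from $C^1_\sharp(Y;\CC)$ onto $C^0_\sharp$-data (via the explicit inverse (\ref{fph}), whose denominator $e^{-i2\pi\eta_3}-1$ is nonzero exactly when $\eta_3\notin\ZZ$), forcing $\tilde\ph^*=0$. So $\eta_3\notin\ZZ$ kills the fiber measure rather than ``producing a nontrivial coupling term'' as you suggest.

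Without this identification your limit equation is unjustified: a priori the limit problem could contain a second, nonlocal potential supported by $\tilde\psi^*$, exactly as in the Dirichlet problems of \cite{BeBo,BrTc} where $a^\ep$ is $L^1$-bounded. Your fallback suggestions --- a two-sided estimate of $C^\ep_\eta$ and a rescaled local problem near one fiber with twisted boundary conditions --- are not needed and would not by themselves decide whether $\tilde\psi^*$ vanishes; the weighted Poincar\'e--Wirtinger inequality (\ref{PW}) is used only once, to get the $L^1$-boundedness of $\tilde\psi^\ep$ (via (\ref{estbpsiep})--(\ref{estbpsiep3})) so that a limit measure exists at all. Once (\ref{limtpsiep}) is in hand, the rest of your argument (weak limit problem for $-\De+\ga$, Rellich to keep the normalization $\int_Y|\psi^*|^2=1$, matching with the upper bound, and the Fourier computation giving $|\eta|^2$ for $|\eta|\le\tfrac12$) is correct and coincides with the paper's.
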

\begin{Rem}
For a fixed $\ep>0$, the function $\eta\mapsto\la^\ep_1(\eta)$ is analytic in a neighborhood of $0$. However, the limit $\la^*_1$ of $\la^\ep_1$ is not even continuous at $\eta=0$, since by \refe{asylaepeta}. and \refe{laepetaga0}. we have
\[
\la^*_1(0)=0\quad\mbox{while}\quad\lim_{\eta\to 0,\,\eta_3\neq 0}\la^*_1(\eta)=\ga>0.
\]
Contrary to the case of the $L^1$-bounded coefficients the $L^1(Y)$-unboundedness of $a^\ep$ induces a gap of the first Bloch eigenvalue. Therefore, the very high conductivity of the fiber structure deeply modifies the wave propagation in any direction $\eta$ such that $\eta_3\notin\ZZ$.
\end{Rem}
\noindent
{\bf Proof of Theorem~\ref{thm2}.} First we will determine the limit of the eigenvalue problem \refe{vppsiep}..
Following \cite{BeBo,BrTc} consider the function $\hat{v}^\ep$ related to the fibers capacity and defined by
\beq\label{hvep}
\hat{v}^\ep(x):=\hat{V}^\ep\left({x\over\ep}\right)\quad\mbox{for }x\in\RR^3,
\eeq
where $\hat{V}^\ep$ is the $y_3$-independent $Y$-periodic function defined in the cell period $Y$ by
\beq\label{hVep}
\hat{V}^\ep(y):=
\left\{\ba{cl}
0 & \mbox{if }\in[0,r_\ep]
\\ \ecart
\dis {\ln r-\ln r_\ep\over\ln R-\ln r_\ep} & \mbox{if }r\in(r_\ep,R)
\\ \ecart
1 & \mbox{if }r\geq R
\ea\right.
\quad\mbox{where}\quad r:=\sqrt{(y_1-\pi)^2+(y_2-\pi)^2},
\eeq
and $R$ is a fixed number in $(r_\ep,\pi)$. By a simple adaptation of the Lemma~2 of \cite{BrTc} combined with~\refe{rep}. the sequence $\hat{v}^\ep$ satisfies the following properties
\beq\label{conhvep}
\hat{v}^\ep=0\;\;\mbox{in }\om^\ep\quad\mbox{and}\quad\hat{v}^\ep\rightharpoonup 1\;\;\mbox{weakly in }H^1(Y),
\eeq
and for any bounded sequence $v^\ep$ in $H^1(Y)$, with ${1_{\om^\ep}\over|\om^\ep|}\,v^\ep$ bounded in $L^1(Y)$,
\beq\label{limhvepvep}
\nabla v^\ep\cdot\nabla\hat{v}^\ep-\ga\left(v^\ep-|Y|\,{1_{\om^\ep}\over|\om^\ep|}\,v^\ep\right)
\rightharpoonup 0\quad\mbox{weakly in }\M(\bar{Y})*.
\eeq
The last convergence involves the potential $v^\ep$ in the whole domain and the rescaled potential ${1_{\om^\ep}\over|\om^\ep|}\,v^\ep$ in the fibers set. In \cite{BeBo,BrTc,Bri2} it is proved that under assumption \refe{rep}. the homogenization of the conduction problem, with a Dirichlet boundary condition on the bottom of a cylinder parallel to the fibers, yields two different limit potentials inducing:
\begin{itemize}
\item either a nonlocal term if $a^\ep$ is bounded in $L^1$,
\item or only a zero-order term if $a^\ep$ is not bounded in $L^1$.
\end{itemize}
Here the situation is more intricate since the Dirichlet boundary condition is replaced by condition \refe{H1eta}.. This needs an alternative approach to obtain the boundedness of the potential $\psi^\ep$ solution of \refe{vppsiep}. and its rescaled version ${1_{\om^\ep}\over|\om^\ep|}\,\psi^\ep$.
\par
On the one hand, putting $e^{i\,x\cdot\eta}\,\hat{v}^\ep/\|e^{i\,x\cdot\eta}\,\hat{v}^\ep\|_{L^1(Y)}$ which is zero in $\om^\ep$, as test function in the minimization problem \refe{laepeta}. and using \refe{rep}. we get that $\la^\ep_1(\eta)$ is bounded. Hence, the sequence $\psi^\ep$ is bounded in $H^1_\eta(Y;\CC)$, and up to a subsequence converges weakly to some $\psi^*$ in $H^1_\eta(Y;\CC)$. On the other hand, the boundedness of ${1_{\om^\ep}\over|\om^\ep|}\,\psi^\ep$ in $L^1(Y;\CC)$ is more delicate to derive. To this end, we need the following Poincar\'e-Wirtinger inequality weighted by the conductivity $A^\ep(y):=a^\ep(\ep y)$:
\beq\label{PW}
\int_Y A^\ep\left|\,V-\fint_Y V\,dy\,\right|^2 dy\leq C\,|\ln r_\ep|\,\|A^\ep\|_{L^1(Y)}\int_Y A^\ep\,|\nabla V|^2\,dy,
\quad\forall\,V\in H^1(Y;\CC),
\eeq
which is an easy extension of the Proposition~2.4 in \cite{Bri1} to the case where $A^\ep$ is not bounded in~$L^1(Y)$.
Rescaling \refe{PW}. and using \refe{rep}. combined with the boundedness of $\la^\ep_1(\eta)$ we get that
\beq\label{estbpsiep}
\ba{ll}
\dis \int_Y a^\ep\,\big|\psi^\ep-\breve{\psi^\ep}\big|^2\,dx
& \dis \leq c\,\ep^2\,|\ln r_\ep|\,\|A^\ep\|_{L^1(Y)}\int_Y a^\ep\,|\nabla\psi^\ep|^2\,dx
\\ \ecart
&\dis \leq c\,\|A^\ep\|_{L^1(Y)}\int_Y a^\ep\,|\nabla\psi^\ep|^2\,dx\leq c\,\|a^\ep\|_{L^1(Y)},
\ea
\eeq
where for any $\chi\in L^2(Y;\CC)$, $\breve{\chi}$ denotes the piecewise constant function
\beq\label{bchiep}
\breve{\chi}:=\sum_{k\in\{0,\dots,\ep^{-1}-1\}^3}\left(\fint_{2\pi\ep k+\ep Y}\chi\,dx\right)1_{2\pi \ep k+\ep Y}.
\eeq
Then, from the Jensen inequality, the estimates $\be_\ep\,|\om^\ep|\sim\|a^\ep\|_{L^1(Y)}$ and \refe{estbpsiep}. we deduce that
\beq\label{estbpsiep2}
\fint_{\om^\ep}\big|\psi^\ep-\breve{\psi^\ep}\big|\,dx\leq\left(\fint_{\om^\ep}\big|\psi^\ep-\breve{\psi^\ep}\big|^2dx\right)^{1\over 2}
\leq c\left(\int_{\om^\ep}{a^\ep\over\|a^\ep\|_{L^1(Y)}}\,\big|\psi^\ep-\breve{\psi^\ep}\big|^2dx\right)^{1\over 2}\leq c.
\eeq
Moreover, since $\big|\om^\ep\cap(2\pi\ep k+\ep Y)\big|=\ep^3\,|\om^\ep|$ for any $k\in\{0,\dots,\ep^{-1}-1\}^3$, we have
\beq\label{estbpsiep3}
\ba{ll}
\dis \fint_{\om^\ep}\big|\breve{\psi^\ep}\big|\,dx &
\dis \leq\sum_{k\in\{0,\dots,\ep^{-1}-1\}^3}{1\over|\om^\ep|}
\int_{\om^\ep\cap(2\pi\ep k+\ep Y)}\left(\fint_{2\pi\ep k+\ep Y}\left|\psi^\ep\right|\right)dx
\\ \ecart
& \dis =\sum_{k\in\{0,\dots,\ep^{-1}-1\}^3}{1\over|Y|}\int_{2\pi\ep k+\ep Y}\left|\psi^\ep\right|
=\fint_Y\left|\psi^\ep\right|dx\leq c.
\ea
\eeq
Estimates \refe{estbpsiep2}. and \refe{estbpsiep3}. imply that the rescaled potential ${1_{\om^\ep}\over|\om^\ep|}\,\psi^\ep$ is bounded in $L^1(Y;\CC)$. Therefore, up to extract a new subsequence there exists a Radon measure $\tilde{\psi}^*$ on $\bar{Y}$ such that
\beq\label{tpsi*}
\tilde{\psi}^\ep:={1_{\om^\ep}\over|\om^\ep|}\,\psi^\ep\;\rightharpoonup\;\tilde{\psi}^*\quad\mbox{weakly in }\M(\bar{Y})\,*,
\eeq
or equivalently,
\beq\label{tph*}
\tilde{\ph}^\ep:=e^{-i\,x\cdot\eta}\, \tilde{\psi}^\ep\;\rightharpoonup\;\tilde{\ph}^*:=e^{-i\,x\cdot\eta}\,\tilde{\psi}^*\quad\mbox{weakly in }\M(\bar{Y})*.
\eeq
\par
Now, we have to evaluate the Radon measure $\tilde{\psi}^*$.
Let $\chi\in C^1_\eta(Y;\CC)$, since $1_{\om^\ep}$ is independent of the variable $x_3$ and the function $\psi^\ep\,\overline{\chi}$ is $Y$-periodic, an integration by parts yields
\beq\label{D3psiep}
\fint_{\om^\ep}\psi^\ep\,\partial_3\overline{\chi}\,dx=-\fint_{\om^\ep}\partial_3\psi^\ep\,\overline{\chi}\,dx.
\eeq
Moreover, using successively the Cauchy-Schwarz inequality, the boundedness of $\la^\ep_1(\eta)$ and the estimate \refe{aepbep}. satisfied by $\be_\ep$, we have
\beq\label{estD3psiep}
\left|\fint_{\om^\ep}\partial_3\psi^\ep\,\overline{\chi}\,dx\right|
\leq\left(\fint_{\om^\ep}|\nabla\psi^\ep|^2\,dx\right)^{1\over 2}\left(\fint_{\om^\ep}|\chi|^2\,dx\right)^{1\over 2}
\leq {c\over\sqrt{\be_\ep\,|\om^\ep|}}\,\left(\fint_{\om^\ep}|\chi|^2\,dx\right)^{1\over 2}=o(1).
\eeq
Then, passing to the limit in \refe{D3psiep}. thanks to \refe{tpsi*}. and \refe{estD3psiep}., we get that
\beq\label{d3tpsi*}
\int_{\bar{Y}}\partial_3\overline{\chi}\,d\tilde{\psi}^*=0.
\eeq
Writing $\chi=e^{i\,x\cdot\eta}\,\overline{\ph}$ with $\ph\in C^1_\sharp(Y;\CC)$, and using \refe{tph*}., equality \refe{d3tpsi*}. reads as
\beq\label{d3tph*}
\int_{\bar{Y}}\left(\partial_3\ph-i\,\eta_3\,\ph\right)d\tilde{\ph}^*=0.
\eeq
From now on assume that $\eta_3\notin\ZZ$.
Then, for $f\in C^1_\sharp(Y;\CC)$, we may define the function $\ph$ by
\beq\label{fph}
\ph(x',x_3):=e^{i\,\eta_3 x_3}\int_0^{x_3}e^{-i\,\eta_3 t}\,f(x',t)\,dt+{e^{i\,\eta_3 x_3}\over e^{-i\,2\pi\eta_3}-1}\int_0^{2\pi}e^{-i\,\eta_3 t}\,f(x',t)\,dt.
\eeq
It is easy to check that $\ph$ belongs to $C^1_\sharp(Y;\CC)$ and satisfies the equation $\partial_3\ph-i\,\eta_3\,\ph=f$ in $\RR^3$.
Therefore, from \refe{d3tph*}. we deduce that
\beq\label{limtphiep}
\int_{\bar{Y}}f\,d\tilde{\ph}^*=0,\quad\forall\,f\in C^1_\sharp(Y;\CC),
\eeq
or equivalently by \refe{tph*}.,
\beq\label{limtpsiep}
\int_{\bar{Y}}\overline{\chi}\,d\tilde{\psi}^*=0,\quad\forall\,\chi\in C^1_\eta(Y;\CC).
\eeq
\par
We can now determine the limit of the eigenvalue problem \refe{vppsiep}.. Let $\chi\in C^1_\eta(Y;\CC)$, putting the function $\chi\,\hat{v}^\ep$ defined by \refe{hvep}. as test function in \refe{vppsiep}. we have
\[
\int_Y \hat{v}_\ep\nabla\psi^\ep\cdot\nabla\overline{\chi}\,dx+\int_Y\nabla\hat{v}_\ep\cdot\nabla\psi^\ep\,\overline{\chi}\,dx
=\la^\ep_1(\eta)\int_Y\psi^\ep\,\overline{\chi}\,\hat{v}^\ep\,dx.
\]
Consider a subsequence of $\ep$, still denoted by $\ep$, such that $\la^\ep_1(\eta)$ converges to $\la^*_1(\eta)$. Then, passing to the limit in the previous equality thanks to the convergence of $\psi^\ep$ to $\psi^*$ in $H^1_\eta(Y;\CC)$, to \refe{conhvep}., and to the limit \refe{limhvepvep}. combined with equality \refe{limtpsiep}., we obtain the limit eigenvalue problem
\beq\label{limvppsi*}
\int_Y\nabla\psi^*\cdot\nabla\overline{\chi}\,dx+\ga\int_Y\psi^*\,\overline{\chi}\,dx=\la^*_1(\eta)\int_Y\psi^*\,\overline{\chi}\,dx,
\quad\forall\,\chi\in H^1_\eta(Y;\CC),
\eeq
where $\psi^*$ satisfies \refe{psi*}..
\par
It remains to prove that the limit of the first Bloch eigenvalue is given by
\beq\label{la*etaga}
\la_1^*(\eta)=\ga+\min\left\{\int_Y |\nabla\psi|^2\,dx\;:\;\psi\in H^1_\eta(Y;\CC)\mbox{ and }\int_Y|\psi|^2\,dx=1\right\},
\eeq
Let $\chi$ be a function in $C^1_\eta(Y;\CC)$ with a unit $L^2(Y)$-norm. Using \refe{laepeta}., \refe{conhvep}. and the convergence
\[
|\nabla\hat{v}^\ep|^2\;\rightharpoonup\;\lim_{\ep\to 0}\left({1\over\ep^2}\fint_Y|\nabla\hat{V}^\ep|^2\,dy\right)=\ga
\quad\mbox{weakly in }\M(\bar{Y})^2\,*,\quad\mbox{due to \refe{rep}. and \refe{hVep}.},
\]
we have
\[
\ba{ll}
\dis \la^\ep_1(\eta) & \dis \leq{1\over\|\chi\,\hat{v}^\ep\|^2_{L^2(Y)}}\int_Y a^\ep\big|\nabla(\chi\,\hat{v}^\ep)\big|^2\,dx
\\ \ecart
& \dis ={1\over\|\chi\,\hat{v}^\ep\|^2_{L^2(Y)}}\left(\int_Y |\nabla\hat{v}^\ep|^2\,|\chi|^2\,dx+\int_Y (\hat{v}^\ep)^2\,|\nabla\chi|^2\,dx
+2\int_Y\hat{v}^\ep\,\nabla\hat{v}^\ep\cdot\Re\left(\overline{\chi}\nabla\chi\right)dx\right)
\\ \ecart
& \dis =\ga+\int_Y |\nabla\chi|^2\,dx+o(1),
\ea
\]
which, by a density argument, implies that
\[
\la^*_1(\eta)\leq\ga+\int_Y |\nabla\psi|^2\,dx,\quad\forall\,\psi\in H^1_\eta(Y;\CC)\mbox{ with }\int_Y |\psi|^2\,dx=1.
\]
This combined with \refe{limvppsi*}. and \refe{psi*}. yields the minimization formula \refe{la*etaga}., which shows the uniqueness of the limit. Therefore, limit \refe{laepetaga}. holds for the whole sequence $\ep$. Finally, using the expansion in Fourier's series with $|\eta|\leq{1\over 2}$, formula \refe{laepetaga}. reduces to \refe{laepetaga0}.. The proof of Theorem~\ref{thm2} is thus complete. \cqfd
\par\bs\noindent
{\bf Acknowledgment.} The authors wish to thank J. Casado-D\'iaz for a stimulating discussion about Lemma~\ref{lemM} in connection with reference \cite{Man}. This work has been carried out within the project ``Homogenization and composites" supported by the {\em Indo French Center for Applied Mathematics - UMI IFCAM}. The authors are also grateful to the hospitality of TIFR-CAM Bangalore in February 2013 and INSA de Rennes in May 2013.
%
%
%
%

%
%
\end{document}